\newcommand{\setsymbol}[1]{\ensuremath{\mathbb{#1}}}%
\newcommand{\R}{\setsymbol{R}}%
\newcommand{\Sphere}{\setsymbol{S}}
\newcommand{\Torus}{\setsymbol{T}}%
\newcommand{\KulNo}{\ensuremath{{~\wedge\!\!\!\!\!\bigcirc~}}}
\newtheorem{definition}{Definition}[section]
\newtheorem{lemma}[definition]{Lemma}
\newtheorem{theorem}[definition]{Theorem}
\newtheorem{proposition}[definition]{Proposition}
\newtheorem{corollary}[definition]{Corollary}
\DeclareMathOperator{\Rm}{Rm}
\DeclareMathOperator{\Int}{Int}
\DeclareMathOperator{\trace}{tr}
\DeclareMathOperator{\linspan}{span}
\DeclareMathOperator{\Sym}{Sym}
\title{Preserving positive intermediate curvature}
\author{Tsz-Kiu Aaron Chow}
\address{Department of Mathematics \\ Massachusetts Institute of Technology \\ Cambridge, MA, 02139}
\email{chowtka@mit.edu}
\author{Florian Johne}
\address{Department of Mathematics \\ Columbia University \\ New York, NY, 10027}
\email{johne@math.columbia.edu}
\author{Jingbo Wan}
\address{Department of Mathematics \\ Columbia University \\ New York, NY, 10027}
\email{jw3976@columbia.edu}
\begin{document}

\begin{abstract}
 Consider a compact manifold $N$ (with or without boundary) of dimension $n$. Positive $m$-intermediate curvature interpolates between positive Ricci curvature ($m = 1$) and positive scalar curvature ($m = n-1$), and it is obstructed on partial tori $N^n = M^{n-m} \times \Torus^m$.
 Given Riemannian metrics $g, \bar{g}$ on $(N, \partial N)$ with positive $m$-intermediate curvature and 
 $m$-positive difference $h_g - h_{\bar{g}}$
 of second fundamental forms we show that there exists a smooth family of Riemannian metrics with positive $m$-intermediate curvature interpolating between $g$ and $\bar{g}$.
 Moreover, we apply this result to prove a non-existence result for partial torical bands with positive $m$-intermediate curvature and strictly $m$-convex boundaries.
\end{abstract}
\maketitle

\section{Introduction}
The existence of Riemannian metrics with positive curvature implies obstructions on the topology of closed manifolds:
Manifolds with topology $N^n = M^{n-1} \times \Sphere^1$ do not admit metrics of positive Ricci curvature by the Theorem of Bonnet--Myers,
while manifolds with topology $N^n = \Torus^n$ do not admit metrics of positive scalar curvature by the resolution of the Geroch conjecture
due to R.~Schoen and S.-T.~Yau 
\cite{Schoen-Yau:1979:Structure_PositiveScalar} (for $3 \leq n \leq 7$ by using minimal hypersurfaces) and M.~Gromov and H.-B.~Lawson
\cite{GromovLawson:1984:DiracOperator} (by using spinors and the Atyah--Singer index theorem).

The above results yield obstructions for positive curvature on the the partial tori $N^n = M^{n-m} \times \Torus^m$ for the limit cases $m = 1$ and $m = n-1$.
Recently, S.~Brendle, S.~Hirsch and the second author introduced the notion of positive intermediate curvature (see Section 2 for a precise definition), which interpolates between positive Ricci curvature $(m=1)$ and positive scalar curvature $(m = n-1)$. They obtained the following obstruction result on partial tori:
\begin{theorem}[Generalized Geroch conjecture, Theorem 1.5 in \cite{Brendle-Hirsch-Johne}] \ \\
Assume $n \leq 7$ and $1 \leq m \leq n-1$. Let $N^n$ be a closed and orientable manifold of dimension $n$,
 and suppose that there exists a closed and orientable manifold 
 $M^{n-m}$ and a map $F: N^n \rightarrow M^{n-m} \times \Torus^m$
 with non-zero degree.
 Then the manifold $N$ does not admit a metric with positive $m$-intermediate curvature.
 \label{Theorem:BHJ:GeneralizationGerochsConjecture}
\end{theorem}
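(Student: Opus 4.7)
The plan is to argue by contradiction: assume such a metric $g$ with positive $m$-intermediate curvature exists on $N$, and then extract from the hypothesis $\deg F \neq 0$ enough topological information to produce a sequence of $m$ nested minimal slices on which the stability inequalities, when combined across all $m$ steps, contradict positivity of the $m$-intermediate curvature. This follows the Schoen--Yau descent philosophy, but with an $m$-fold slicing to exploit the partial torus factor, and with weighted test functions tuned to the $m$-intermediate curvature (rather than to $\Ric$ or $\scal$).

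First I would use $\deg F \neq 0$ to pull back the $m$ generators of $H^1(\Torus^m; \mathbb{Z})$ to obtain $m$ cohomology classes in $H^1(N;\mathbb{Z})$ with nontrivial product, and dualize them to a sequence of homology classes suitable for area minimization. Iteratively, at step $k = 1,\ldots,m$, I would produce a smooth, two-sided, stable minimizer $\Sigma_k \subset \Sigma_{k-1}$ (with $\Sigma_0 = N$) of a weighted area functional of the form $\int_\Sigma u_{k-1}\, d\mathcal{H}^{n-k}$, where $u_{k-1}$ is a positive warping function built inductively from the principal eigenfunctions of the Jacobi-type operators on the previous slices; smoothness requires $\dim \Sigma_k = n-k \leq 6$, which is the source of the restriction $n \leq 7$. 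The nested hypersurfaces $\Sigma_k$ continue to carry the remaining pulled-back cohomology classes, and after $m$ steps one obtains a closed $(n-m)$-dimensional manifold $\Sigma_m$ admitting a map of nonzero degree to $M^{n-m}$; in particular $\Sigma_m$ is nonempty.

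The heart of the argument is the combined stability computation. At each slice I would write down the second variation of the weighted area in the direction of the unit normal, multiplied by a carefully chosen test function involving $u_k$, and integrate by parts. The curvature term that appears is a partial Ricci-type contraction along the $k$ accumulated normal directions; summing the $m$ stability inequalities telescopes the warping factors and produces, after rearrangement, precisely the $m$-intermediate curvature of $g$ along an $m$-frame formed by the normals of $\Sigma_1,\ldots,\Sigma_m$. Positivity of this quantity, combined with the fact that $\Sigma_m$ is closed and nonempty, forces the leading eigenfunctions $u_k$ to vanish identically, which is the desired contradiction. The main obstacle in carrying this out is designing the sequence of weighted area functionals and the matching test functions so that the resulting curvature term assembles exactly into the $m$-intermediate curvature; this requires a delicate algebraic bookkeeping of the second fundamental forms of the nested slices and is, I expect, the technical core of the proof.
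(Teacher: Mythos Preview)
This theorem is not proved in the present paper; it is quoted from \cite{Brendle-Hirsch-Johne} (their Theorem~1.5) and invoked only as a black box in Section~6 to deduce Theorem~\ref{boundarypartialtoricalband} from the doubling lemma. There is therefore no proof in this paper to compare your proposal against.

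That said, your outline is an accurate sketch of the Brendle--Hirsch--Johne argument: the $m$-fold Schoen--Yau descent through weighted stable minimal slicings, with the dimension bound $n\le 7$ entering via regularity of area minimizers, and the pulled-back $\Torus^m$ cohomology classes guaranteeing that each successive homology class is nontrivial. One minor inaccuracy in your final paragraph: the contradiction is not that the eigenfunctions $u_k$ are forced to vanish identically, but rather that after integrating the combined (rearranged) stability inequality over the nonempty closed slice $\Sigma_m$ one obtains an inequality of the form $\int_{\Sigma_m}(\text{strictly positive})\le 0$. The algebraic bookkeeping you flag---arranging the second fundamental form cross-terms so that the remainder after assembling the $m$-intermediate curvature has a definite sign---is indeed the technical core of the original proof.
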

The associated rigidity question was studied by J.~Chu, K-K.~Kwong and M.-C.~Lee \cite{Chu-Kwong-Lee:2022:Rigidity} in ambient dimension at most five. S.~Chen \cite{Chen:2022:ArbitraryEnds}
extended the obstruction result to manifolds with arbitrary ends. Moreover,
K.~Xu \cite{Xu:2023:DimensionConstraints} showed sharpness
of the result in \cite{Brendle-Hirsch-Johne} by constructing counterexamples for dimensions $n > 7$ and $3 \leq m \leq n-3$.
The above result was recently used by M.~L.~Labbi \cite{Labbi:2023:Preordering_PSC} to compute
the Riemann invariant of products of spheres and tori.

In this work, we study the interaction of the internal geometry and the boundary geometry for metrics of positive intermediate curvature.
The corresponding question for positive scalar curvature and mean curvature on the boundary dates back to work by 
M.~Gromov and H.-B.~Lawson \cite{GromovLawson:Spin}. A similar interaction appears in the proof of the positive mass theorem by R.~Schoen and S.-T.~Yau \cite{Schoen-Yau:1979:PMT} --- planes with positive mean curvature act as barriers for minimal hypersurfaces in the bulk region with positive scalar curvature. Related is work by Y.~Shi and L.-F.~Tam \cite{Shi-Tam:2002:BoundaryBehaviour},
where they proved an estimate for the integral of the mean curvature over the boundary in manifolds with non-negative scalar curvature.
P.~Miao \cite{Miao:2002:PMT-Corners} proved a positive mass theorem on manifolds with corners by performing a suitable intrinsic bending construction.
A pertubation argument, which made the boundary totally geodesic while keeping the scalar curvature non-decreasing, allowed S.~Brendle, F.C.~Marques and A.~Neves \cite{Brendle-Marques-Neves} to construct counterexamples to the Min-Oo conjecture.

Recently, the first author proved a general result 
\cite[Main Theorem 1]{Chow} on the interaction of internal geometry and boundary geometry by suitably gluing Riemannian metrics.
The result applies to a wide range of positive curvature conditions,
for example to metrics with positive curvature operator, PIC 2, PIC 1 (with convex boundary),
positive isotropic curvature (with two-convex boundary) and
positive scalar curvature (with mean-convex boundary).
For a different approach to the gluing problem, see also the thesis by A.~Schlichting,
\cite{Schlichting:2014:Thesis}.

The first result of our work extends the gluing result 
of the first author to $m$-intermediate curvature with the natural condition of $m$-convexity on the boundary:
\begin{theorem}
[Preserving positive $m$-intermediate curvature]
\label{main thm} \ \\
 Suppose that $N^n$ is a compact smooth manifold
 with smooth boundary $\partial N$ of dimension $\dim N = n$.
 Let $g, \tilde{g}$ be Riemannian metrics on $N$, such that
 $g = \tilde{g}$ on the boundary $\partial N$.

 Then there exists $\lambda_0 > 0$,
 a family of smooth Riemannian metrics
 $\{\hat{g}_{\lambda}\}_{\lambda > \lambda_0}$,
 and a neighborhood $U$ of the boundary $\partial N$,
 such that 
 the metric $\hat{g}_{\lambda}$ agrees with the metric $g$ outside of $U$,
 and the metric $\hat{g}_{\lambda}$ agrees with the metric
 $\tilde{g}$ in a neighbourhood of $\partial N$. Additionally,
 we have $\hat{g}_{\lambda} \rightarrow g$ as $\lambda \rightarrow \infty$ in $C^{\alpha}$ for any $\alpha \in (0,1)$.
 
 Moreover, let $1 \leq m \leq n-1$. If 
 \begin{enumerate}
  \item the Riemannian manifolds
  $(N,g)$ and $(N, \tilde{g})$ have positive $m$-intermediate curvature,
  \item the difference $h_g - h_{\tilde{g}}$ is strictly $m$-convex (i.e. strictly $m$-positive),
 \end{enumerate}
then the Riemannian manifold $(N, \hat{g}_{\lambda})$
has positive $m$-intermediate curvature for all $\lambda > \lambda_0$.
\end{theorem}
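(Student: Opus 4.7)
My plan is to follow and adapt the gluing construction of the first author in \cite{Chow} to the $m$-intermediate curvature setting. The argument splits into a geometric construction (yielding the family $\hat{g}_\lambda$ together with its $C^\alpha$-convergence, independently of any curvature assumption) and a curvature analysis that uses the $m$-positivity hypothesis on $h_g - h_{\tilde{g}}$.

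For the geometric setup, I would work in Fermi coordinates for $g$ in a tubular neighborhood $U$ of $\partial N$, writing $g = dt^2 + g(t)$, and use a diffeomorphism of $U$ equal to the identity on $\partial N$ to bring $\tilde{g}$ into the Fermi form $\tilde{g} = dt^2 + \tilde{g}(t)$; intrinsic curvatures and the second fundamental form $h_{\tilde{g}}$ are preserved. Choose a smooth cutoff $\chi : \R \to [0,1]$ with $\chi \equiv 0$ on $(-\infty, 1]$ and $\chi \equiv 1$ on $[2, \infty)$, and define
\[
\hat{g}_\lambda = dt^2 + \chi(\lambda t)\, g(t) + \bigl(1 - \chi(\lambda t)\bigr)\, \tilde{g}(t).
\]
Then $\hat{g}_\lambda = g$ for $t \geq 2/\lambda$ and $\hat{g}_\lambda = \tilde{g}$ for $t \leq 1/\lambda$. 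Since $g = \tilde{g}$ on $\partial N$, Taylor expansion gives $g(t) - \tilde{g}(t) = O(t)$, so $\|\hat{g}_\lambda - g\|_{C^0} = O(1/\lambda)$ while $\|\hat{g}_\lambda - g\|_{C^1} = O(1)$ (the factor $\lambda$ coming from $\chi'$ is offset by the factor $t \sim 1/\lambda$), and the claimed $C^\alpha$-convergence then follows from the interpolation inequality.

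The analytic core is the curvature estimate in the transition shell $\{1/\lambda \leq t \leq 2/\lambda\}$, where $\partial_t^2 \hat{g}_\lambda$ is of size $\lambda$. The Gauss and Codazzi equations keep the tangential and mixed components of $\Rm_{\hat{g}_\lambda}$ bounded uniformly in $\lambda$, since the shape operator of the slices $\{t = \mathrm{const}\}$ stays bounded and the intrinsic slice metrics differ from the boundary metric by $O(1/\lambda)$. The dangerous contribution is the normal sectional curvature $R(X, \partial_t, Y, \partial_t) = \tfrac{1}{2}\,\partial_t^2 \hat{g}_\lambda(X, Y) + O(1)$; substituting $g(t) - \tilde{g}(t) = -2t\,(h_g - h_{\tilde{g}}) + O(t^2)$ yields $\partial_t^2 \hat{g}_\lambda = \lambda\,\Psi(\lambda t)\,(h_g - h_{\tilde{g}}) + O(1)$ for an explicit profile $\Psi$ built from $\chi$. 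For any $m$-plane $V$, decomposing $C_m(V)$ in a frame adapted to $\partial_t$ shows the leading $O(\lambda)$ contribution to $C_m(V)$ is a multiple of $\Psi(\lambda t)\cdot \trace_W(h_g - h_{\tilde{g}})$ for an $m$-dimensional subspace $W$ of the slice tangent space determined by $V$ and $\partial_t$. Strict $m$-positivity of $h_g - h_{\tilde{g}}$ then forces $\trace_W(h_g - h_{\tilde{g}}) > 0$ uniformly in $V$, while outside the shell $\hat{g}_\lambda$ equals $g$ or $\tilde{g}$ and $C_m > 0$ by hypothesis.

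The main obstacle is securing a uniform sign for $\Psi$ across the shell. For the naive linear interpolation one computes $\Psi(s) = s\chi''(s) + 2\chi'(s)$, and the identity $\int_1^2 \Psi\,ds = 1$ combined with $\chi'(1) = \chi'(2) = 0$ forces $\Psi$ to change sign on $[1,2]$; equivalently the induced interpolation $s\chi'(s)+\chi(s)$ of the second fundamental form overshoots $h_g$ and is not monotone. Resolving this requires the refined construction of \cite{Chow}: for instance, first deforming $\tilde{g}$ through a one-parameter family of metrics with positive $m$-intermediate curvature to one whose second fundamental form agrees with $h_g$, so that the subsequent cutoff gluing produces a single-signed leading profile. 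Once $\Psi$ has the correct sign uniformly on the shell, choosing $\lambda_0$ large enough that the $O(\lambda)$ term dominates all bounded error terms gives $C_m(\hat{g}_\lambda) > 0$ throughout $N$ for $\lambda > \lambda_0$.
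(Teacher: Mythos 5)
Your setup in Fermi coordinates and the reduction to a sign condition on the leading $O(\lambda)$ term are the right instincts, and you correctly diagnose the fatal defect of the symmetric convex-combination cutoff: if $\chi$ goes from $0$ to $1$ with $\chi'(1)=\chi'(2)=0$, then $\int_1^s\Psi = s\chi'(s)+\chi(s)$ must overshoot its endpoint value (a short Gronwall argument shows $1-\chi(s) \ge 1/s$ would be forced if $\Psi\ge 0$, contradicting $\chi(2)=1$), so $\Psi$ changes sign and the naive gluing cannot work. However, your proposed repair --- first deform $\tilde g$ through positive $m$-intermediate curvature metrics to one with $h_{\tilde g}=h_g$, then glue --- is not carried out, is not the strategy in the paper, and has a serious chicken-and-egg problem: such a deformation would itself consume the hypothesis $h_g - h_{\tilde g}$ strictly $m$-positive (and if you actually achieved $h_{\tilde g_1}=h_g$ you would need a new argument to glue two metrics agreeing to first order, whose normal second derivatives still clash). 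So as written the argument has a genuine gap at its central step.

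What the paper does instead is replace the symmetric blend by the asymmetric ansatz $\hat g_\lambda = g + \lambda^{-1}\chi(\lambda\rho)\,S$, where $\tilde g - g = \rho S$ near $\partial N$ and the cutoff is normalized with $\chi(0)=0$, $\chi'(0)=1$, $\chi''<0$ on $[0,1)$, constant for $s\ge 1$ (with a separate logarithmic-scale cutoff $\beta$ in the ultra-thin shell $\{\rho<e^{-\lambda^2}\}$ to make $\hat g_\lambda$ \emph{exactly} equal $\tilde g$ near $\partial N$). Because the second normal derivative of $\lambda^{-1}\chi(\lambda\rho)$ is $\lambda\chi''(\lambda\rho)$ and $-\chi''>0$ by construction, the leading curvature term is $\lambda(-\chi''(\lambda\rho))\,\bigl(S\KulNo(d\rho\otimes d\rho)\bigr)$ with a uniform sign throughout the transition region --- exactly the single-signed profile your construction could not produce. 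Finally, your claim that the leading contribution reduces to $\trace_W(h_g-h_{\tilde g})$ over an $m$-plane $W$ is an oversimplification: for a general orthonormal frame not adapted to $\partial_t$, the leading term is a full Kulkarni--Nomizu expression, and the paper's Proposition \ref{Proposition:Convexity-LinearAlgebra} (that $S|_{T\partial N}$ is $m$-positive iff $S\KulNo(\nu^\flat\otimes\nu^\flat)\in\Int(\mathcal C_m)$) is precisely the algebraic input needed to conclude positivity of this leading term from the hypothesis; your proposal does not supply an equivalent of this lemma. To complete the argument you would need to (i) replace your interpolation by one with a single-signed second derivative, as above, and (ii) prove the Kulkarni--Nomizu/$m$-convexity equivalence.
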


The main ingredient in the proof is Proposition \ref{Proposition:Convexity-LinearAlgebra} relating the cone of positive $m$-intermediate curvature to the Kulkarni--Nomizu product of $m$-convex symmetric two-tensors.

In the second part of the paper we consider Riemannian manifolds with $m$-positive intermediate curvature and strictly $m$-convex boundary:
Let us first recall the doubling lemma by M.~Gromov and H.-B.~Lawson for  manifolds with positive scalar curvature and strictly mean convex boundaries.
\begin{lemma}[Doubling of positive scalar curvature
metrics, M.~Gromov and H.-B.~Lawson \cite{GromovLawson:Spin}]\label{PSCdoubling} \ \\
Suppose $(N,g)$ is an orientable compact smooth Riemannian manifold
with smooth boundary $\partial N$. Assume the metric
$g$ has positive scalar curvature 
and is strictly mean convex (i.e.\ $H_{\partial N} > 0$) with respect
to the outward unit normal.
Then the double of $N$ carries a metric of positive scalar curvature.	
\end{lemma}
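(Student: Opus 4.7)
The plan is to deform $g$ in a thin collar of $\partial N$ to a metric $\hat{g}$ with the same induced metric on $\partial N$ but with totally geodesic boundary and still positive scalar curvature, and then to form the double of $(N, \hat{g})$ by reflecting across the now totally geodesic boundary. The reflected metric is $C^{1,1}$ across the gluing locus with positive scalar curvature in the classical sense on each side, and a small mollification supported in a neighborhood of the gluing locus upgrades it to a smooth metric of positive scalar curvature on $DN$ (strict positivity of a continuous function is stable under $C^2$-small perturbations). The obstruction to doubling $g$ directly is the jump of the second fundamental form: in Fermi coordinates $(t, y) \in [0, \epsilon) \times \partial N$ with $g = dt^2 + g_t$, the naive doubled metric $dt^2 + g_{|t|}$ is only $C^0$ at $t = 0$ because $\partial_t g_t|_{t=0} = -2 h_{\partial N} \neq 0$.

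The boundary-straightening is the Gromov--Lawson bending. I would embed the collar isometrically in $N \times \R$ with product metric $g + ds^2$, and replace a sub-collar by the hypersurface $\Sigma = \Gamma \times \partial N$, where $\Gamma(\tau) = (t(\tau), s(\tau))$ is a unit-speed planar curve in the $(t, s)$-half-plane. The curve $\Gamma$ is chosen to coincide with the $t$-axis deep inside $N$ (so that $\Sigma$ agrees with $N$ away from the collar) and to bend smoothly through a quarter turn to become parallel to the $s$-axis at the new boundary. The new boundary $\{ t = t_0 \} \times \partial N$ is then isometric to $(\partial N, g_{\partial N})$ and totally geodesic in $\Sigma$ by construction, and $\Sigma$ is diffeomorphic to $N$.

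The main obstacle is verifying that the induced scalar curvature of $\Sigma$ stays strictly positive throughout the bend. Since $\Sigma$ carries the metric $d\tau^2 + g_{t(\tau)}$, a direct computation (equivalently, the Gauss equation for $\Sigma \subset N \times \R$) yields the expansion
\[
 R_\Sigma = R_g + 2 \kappa(\tau)\, \dot{s}(\tau)\, H_{t(\tau)}(y) + \dot{s}(\tau)^2 \bigl( 2 \partial_t H_t + H_t^2 + |h_t|^2 \bigr),
\]
where $\kappa$ is the (appropriately oriented) signed geodesic curvature of $\Gamma$ and $H_t$, $h_t$ denote the mean curvature and second fundamental form of the level set $\{ t = \mathrm{const} \}$ in $(N, g)$. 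Localizing the bending to a sufficiently thin collar $\{ 0 \leq t \leq \delta \}$ makes the bracketed error uniformly bounded, while strict mean-convexity together with compactness of $\partial N$ supplies $|H_t| \geq c_0 > 0$ for $t$ small. Choosing $\Gamma$ so that $\kappa \dot{s} H_t \geq 0$ and $|\kappa|$ is large enough wherever the bend is active then dominates the error and forces $R_\Sigma > 0$ pointwise. This is precisely the step where the strict mean-convexity hypothesis is indispensable, and is the principal technical obstacle; once the bent metric with totally geodesic boundary is in hand, doubling and mollifying are routine.
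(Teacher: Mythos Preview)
Your argument is correct. The paper does not prove this lemma separately (it is cited as a classical result of Gromov--Lawson), but the paper's proof of the generalization Lemma~\ref{doublingk} specializes to it when $m=n-1$, and that proof rests on the same geometric mechanism you use: embed a collar in the Riemannian product $N\times I$ and control the curvature of a suitable hypersurface there via the Gauss equation. The packaging, however, differs. You bend along a general curve $\Gamma$ in the $(t,s)$--plane until the boundary becomes totally geodesic, then reflect and mollify the resulting $C^{1,1}$ metric; the paper instead takes the $\epsilon$--tube $D(N)=\{p\in N\times I:\operatorname{dist}(p,N_1)=\epsilon\}$, which realizes the double in a single stroke (the profile curve is a half-circle and the reflection is built in), so no post-hoc doubling or smoothing is needed. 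Your dominant good term $2\kappa\,\dot s\,H_t$ is precisely the $m=n-1$ specialization of the paper's leading term $\tfrac{1}{\epsilon}\cos^2\theta\,\bigl(h_{\partial N_1}\KulNo(\nu^\flat\otimes\nu^\flat)\bigr)$: for the circular profile $\kappa=1/\epsilon$ and $\dot s=\cos\theta$, and by Proposition~\ref{Proposition:Convexity-LinearAlgebra} the $(n-1)$--intermediate curvature of $h\KulNo(\nu^\flat\otimes\nu^\flat)$ reduces to the mean curvature $H$. The tube presentation is slightly more economical; your two-step decomposition makes the role of strict mean-convexity more transparent and isolates a totally-geodesic model that can be reused in other gluing problems.
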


This lemma (in conjunction with the nonexistence of positive scalar curvature metrics on the torus) then implies the following obstruction to positive scalar curvature on torical bands:
\begin{theorem}[Boundaries of a torical band, M.~Gromov and H.-B.~Lawson, \cite{GromovLawson:Spin}]
\label{BoundaryToricalband} \ \\
	Consider
 the smooth manifold with boundary $N = \Torus^{n-1}\times [-1, 1]$ and let $g$ be a Riemannian metric on $N$ with positive scalar curvature. Then the boundary 
 $\partial N$ cannot be strictly mean convex.
\end{theorem}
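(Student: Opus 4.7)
The plan is to argue by contradiction, combining the doubling lemma stated above with the resolution of the Geroch conjecture for the torus. Suppose, towards a contradiction, that $g$ is a metric on $N = \Torus^{n-1} \times [-1,1]$ with positive scalar curvature whose boundary $\partial N = \Torus^{n-1} \sqcup \Torus^{n-1}$ is strictly mean convex with respect to the outward unit normal. Since $N$ is clearly orientable and compact with smooth boundary, the hypotheses of Lemma \ref{PSCdoubling} are satisfied.

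Applying the doubling lemma then yields a smooth Riemannian metric of positive scalar curvature on the double $DN$. The next step is to identify $DN$ topologically: gluing two copies of $\Torus^{n-1} \times [-1,1]$ along their common boundary produces $\Torus^{n-1} \times \Sphere^1$, which is diffeomorphic to the $n$-dimensional torus $\Torus^n$. Hence $\Torus^n$ would carry a Riemannian metric of positive scalar curvature.

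This contradicts the resolution of the Geroch conjecture by R.~Schoen and S.-T.~Yau \cite{Schoen-Yau:1979:Structure_PositiveScalar} (valid in dimensions $3 \leq n \leq 7$ via minimal hypersurface descent) and M.~Gromov and H.-B.~Lawson \cite{GromovLawson:1984:DiracOperator} (valid in all dimensions via the Dirac operator and the Atiyah--Singer index theorem). The contradiction forces $\partial N$ to fail to be strictly mean convex, completing the argument.

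The only conceivable obstacle lies in ensuring that the doubled metric is genuinely smooth (or at least admits a smooth positive scalar curvature perturbation), rather than merely continuous with a corner along $\partial N$; but this is precisely the content of Lemma \ref{PSCdoubling} and therefore does not need to be reproved here. The entire argument is thus a direct concatenation of the doubling lemma, an elementary topological identification, and the Geroch conjecture.
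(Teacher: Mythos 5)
Your proof is correct and follows exactly the approach the paper outlines: the paper itself does not supply a separate proof of Theorem~\ref{BoundaryToricalband} but simply remarks that it follows from Lemma~\ref{PSCdoubling} together with the nonexistence of PSC metrics on $\Torus^n$, which is precisely your argument (doubling gives $D(N) \cong \Torus^{n-1}\times\Sphere^1 \cong \Torus^n$ with a PSC metric, contradicting the Geroch conjecture).
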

D.~Räde extended the above result to a scalar curvature and mean curvature comparison result on more general bands \cite{Raede:2021:BandComparison}.

We extend the above result on scalar curvature, mean curvature and torical bands to partially torical bands
by proving a generalization of the doubling lemma
of M.~Gromov and H.-B.~Lawson.
\begin{theorem}[Boundaries of a partially torical band] \ \\
Let $n \leq 7$ and $1 \leq m \leq n-1$. Suppose $M^{n-m}$ is a closed orientable manifold. Consider the smooth manifold with boundary
$N = M^{n-m}\times \Torus^{m-1}\times [-1, 1]$ and a Riemannian metric $g$ with positive $m$-intermediate curvature on $N$. Then the boundary $\partial N$ cannot be strictly $m$-convex.
\label{boundarypartialtoricalband}
\end{theorem}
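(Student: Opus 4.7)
The plan is to argue by contradiction, following the Gromov--Lawson doubling strategy adapted to $m$-intermediate curvature via Theorem \ref{main thm}. Assume that $(N,g)$ has positive $m$-intermediate curvature and strictly $m$-convex boundary. I would construct a smooth Riemannian metric of positive $m$-intermediate curvature on the Riemannian double $\hat{N} = N \cup_{\partial N} N$; since $\hat{N}$ is topologically $M^{n-m} \times \Torus^{m-1} \times \Sphere^1 \cong M^{n-m} \times \Torus^m$ and the identity has degree one, this would contradict Theorem \ref{Theorem:BHJ:GeneralizationGerochsConjecture}.

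The construction proceeds in two stages. First I would produce an auxiliary metric $\tilde{g}$ on $N$ with $\tilde{g}|_{\partial N} = g|_{\partial N}$, totally geodesic boundary ($h_{\tilde{g}} = 0$), and positive $m$-intermediate curvature. Near each component of $\partial N$, $\tilde{g}$ would be the warped product $dt^2 + \phi(t)^2 g|_{\partial N}$ in Fermi coordinates, where $\phi$ is smooth and even at $t=0$ with $\phi(0) = 1$, $\phi'(0) = 0$, and $\phi''(0)$ sufficiently negative. The evenness of $\phi$ at $t=0$ ensures that the reflection $t \mapsto -t$ extends $\tilde{g}$ smoothly across $\partial N$, which is needed for the doubled metric to be $C^\infty$. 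The warped-product curvature identities show that, at $t=0$, the $m$-intermediate curvature of $\tilde{g}$ is an intermediate curvature of $g|_{\partial N}$ (of order $m$ or $m-1$, depending on whether $\partial_t$ lies in the chosen $m$-frame) plus an additive contribution proportional to $|\phi''(0)|$; by compactness of $\partial N$, making $|\phi''(0)|$ large forces positive $m$-intermediate curvature throughout a collar of $\partial N$. The metric is then extended to all of $N$ using the product structure $N = M^{n-m} \times \Torus^{m-1} \times [-1,1]$ in a way that keeps $m$-intermediate curvature positive.

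At the second stage I would apply Theorem \ref{main thm} to the pair $(g, \tilde{g})$. Both metrics have positive $m$-intermediate curvature by construction, and the difference $h_g - h_{\tilde{g}} = h_g$ is strictly $m$-positive by the assumed strict $m$-convexity of $\partial N$. The theorem then yields $\lambda_0 > 0$ and a smooth family of metrics $\hat{g}_\lambda$ on $N$ with positive $m$-intermediate curvature that equal $\tilde{g}$ in a neighborhood of $\partial N$. Since $\tilde{g}$ is there a warped product with $\phi$ even at $t=0$, the Riemannian double $(\hat{N}, \hat{g}_\lambda^{\mathrm{dbl}})$ is a smooth Riemannian manifold whose $m$-intermediate curvature is positive on both copies, which is the desired contradiction.

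The main technical obstacle lies in the first stage: globally extending $\tilde{g}$ to $N$ while preserving positive $m$-intermediate curvature. The strict concavity of $\phi$ needed at both boundary components cannot be maintained on all of $[-1,1]$ with vanishing endpoint derivatives, since by Rolle's theorem $\phi''$ must change sign and so a single warped product over a fixed base metric on $M^{n-m} \times \Torus^{m-1}$ does not suffice. Overcoming this requires either carefully designing a family of base metrics along $[-1,1]$ interpolating the two boundary restrictions of $g$, or applying Theorem \ref{main thm} separately near each of the two boundary components so that only a thin collar around each needs to be modified, reducing the global extension problem to two independent local ones.
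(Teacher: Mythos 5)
Your proposal takes a genuinely different route from the paper, but as stated it has a structural gap that makes the first stage impossible.

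The paper proves Theorem \ref{boundarypartialtoricalband} by first establishing a doubling lemma (Lemma \ref{doublingk}) that directly generalizes the Gromov--Lawson construction: embed $N_1 \subset N$ in $N\times I$, bend around the edge at distance $\epsilon$, and control the $m$-intermediate curvature of the bent hypersurface $D(N)$ via the Gauss equation. The dominant term near the edge is $\frac{1}{\epsilon}\cos^2\theta\,\bigl(h_{\partial N_1}\KulNo(\nu^\flat\otimes\nu^\flat)\bigr)(e_p,e_q,e_p,e_q)$, whose positivity is exactly what Proposition \ref{Proposition:Convexity-LinearAlgebra} gives from strict $m$-convexity. Theorem \ref{main thm} is not invoked at all in this part of the paper; only Proposition \ref{Proposition:Convexity-LinearAlgebra} is shared between the two arguments. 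Your route instead tries to use Theorem \ref{main thm} to replace $g$ near $\partial N$ by a reflection-symmetric metric with totally geodesic boundary and then double.

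The gap is in your stage one. If there existed a \emph{global} metric $\tilde g$ on $N$ with positive $m$-intermediate curvature, $\tilde g|_{\partial N}=g|_{\partial N}$, totally geodesic boundary, and a reflection-symmetric (even) collar, then the double of $(N,\tilde g)$ would already be a smooth metric with positive $m$-intermediate curvature on $M^{n-m}\times\Torus^m$, contradicting Theorem \ref{Theorem:BHJ:GeneralizationGerochsConjecture} without ever invoking Theorem \ref{main thm}. In other words, the existence of the auxiliary $\tilde g$ you want to build is precisely the statement being disproved, so stage one cannot be completed under the hypotheses, and stage two is then moot. You do flag the extension difficulty honestly, and you correctly observe that a single warped product over a fixed base cannot have $\phi'' < 0$ with $\phi'(\pm 1)=0$; but neither of your suggested remedies resolves this. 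Interpolating base metrics along $[-1,1]$ still must produce a global $\tilde g$ of the impossible kind. Applying the gluing only in a collar around each boundary component would require a \emph{localized} version of Theorem \ref{main thm} in which $\tilde g$ is specified and required to have positive $m$-intermediate curvature only near $\partial N$; the theorem as stated in the paper assumes $(N,\tilde g)$ is globally positive, and while the proof plausibly localizes (the $\tilde g$-curvature is only used in the region $\{\rho<e^{-\lambda^2}\}$), that refinement is not proved and is doing essential work in your argument. The paper's bending construction avoids these issues entirely because it never introduces an auxiliary global metric: it bends the given $g$ inside $N\times I$ and lets the Gauss equation plus Proposition \ref{Proposition:Convexity-LinearAlgebra} produce the curvature positivity near the fold.
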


The work is structured as follows: 
In Section 2 we introduce our notation and recall the definition of intermediate curvature. In Section 3 we prove an algebraic lemma connecting the cone of positive $m$-intermediate curvature and $m$-convexity.
In Section 4 and 5 we prove the gluing result and in Section 6 we perform the doubling constructions.

\textbf{Acknowledgements:}  \ \\
The authors would like to thank Simon Brendle for discussions and encouragement, and the anonymous referee for careful reading of the article.

\section{Preliminaries}
\label{Section:Preliminaries}

Let $(V, \langle \cdot, \cdot \rangle)$ be a $n$-dimensional real inner product space.
The space of algebraic curvature tensors on $V$ denoted by $C_B(V)$
is given by multilinear maps $R: V \times V \times V \times V \rightarrow \R$ with the symmetries of the curvature tensor, i.e.\
\[
 R(v_1,v_2,v_3, v_4) = -R(v_2, v_1, v_3, v_4) \; \text{and} \;
 R(v_1, v_2, v_3, v_4) = R(v_3, v_4, v_1, v_2)
\]
for all $v_1, v_2, v_3, v_4 \in V$,
and satisfying the first Bianchi identity, i.e.\
\[
R(v_1, v_2, v_3, v_4) + R(v_3, v_1, v_2, v_4) + R(v_2, v_3, v_1, v_4) = 0
\]
for all $v_1, v_2, v_3, v_4 \in V$.

We denote the space of symmetric bilinear maps $T: V \times V \rightarrow \R$ by $\Sym^2 V$. The Kulkarni--Nomizu product $\KulNo: \Sym^2 V \times \Sym^2 V \rightarrow C_B(V)$ is defined by 
\[
(S \KulNo T)(v_1, v_2, v_3, v_4)
= S(v_1, v_3) T(v_2, v_4) + S(v_2, v_4) T(v_1, v_3)
- S(v_1, v_4) T(v_2, v_3) - S(v_2, v_3) T(v_1, v_4)
\]
for $v_1, v_2, v_3, v_4 \in V$.

Following Definition 1.1 in work of the S.~Brendle, S.~Hirsch and the second author
\cite{Brendle-Hirsch-Johne} we define the cone $\mathcal{C}_m(V)$ of non-negative $m$-intermediate curvature
in the space of algebraic curvature tensors by
\begin{align*}
 \mathcal{C}_m(V) :=
 \left\{
  T \in C_B(V)
 \; \left| \;
  \sum_{p=1}^m \sum_{q=p+1}^n T(e_p, e_q, e_p, e_q) \geq 0
  \; \text{for all orthonormal bases} \; \{e_i\}_{i=1}^n \; \text{of} \; V
  \right.
 \right\}.
\end{align*}

For a Riemannian manifold $(N^n,g)$ with boundary $\partial N$ we consider its Levi-Civita connection $D$ and its 
Riemann curvature tensor $\Rm_N$ given by the formula
\[
 \Rm_N(X,Y,Z,W)
 =
 -g( D_X D_Y Z - D_Y D_X Z - D_{[X,Y]} Z ,W)
\]
for vector fields $X,Y,Z,W \in \Gamma(TN)$. 

The Riemannian manifold $(N^n,g)$ has
positive $m$-intermediate curvature,
if $\Rm_N(p) \in \Int(\mathcal{C}_m)$ for all $p \in N$
(compare with Definition 1.1 in \cite{Brendle-Hirsch-Johne}).

Let $\nu$ be the inward pointing unit normal vector field on the boundary
$\partial N$. The scalar-valued second fundamental form $h_g: T(\partial N) \otimes T(\partial N) \rightarrow C^{\infty}(\partial N)$ 
of the boundary $\partial N$ with respect to the Riemannian metric
$g$ is defined by
\[
 h_g(X,Y) = g(\nu, D_X Y)
\]
for $X,Y \in \Gamma(TN)$.
With this convention the scalar-valued second fundamental form
is positive on the standard sphere $\Sphere^n \subset \R^{n+1}$
with respect to the inward pointing unit normal vector $\nu = -x$.

We say that the boundary $\partial N$ is strictly $m$-convex (where $1 \leq m \leq n-1$),
if the bilinear form $h_g(p)$ is $m$-positive for all $p \in \partial N$, i.e.\
if $\lambda_1 \leq \lambda_2 \leq \dots \lambda_{n-1}$
denote the eigenvalues of $h_g(p)$, then $\lambda_1 + \dots + \lambda_m > 0$.
For $m = 1$ we recover the notion of strict convexity, and for $m = n-1$ we recover 
the notion of strict mean convexity.

\section{Connecting $m$-convexity and $m$-intermediate curvature}

In this section we prove a lemma in linear algebra,
which allows us to connect the cone of positive $m$-intermediate curvature and strict $m$-convexity.
\begin{proposition}[$m$-intermediate curvature cone and $m$-convexity] \ \\
Let $(V, \langle \cdot, \cdot \rangle)$ be a $n$-dimensional inner product space and $S: V \times V \rightarrow \R$ be a symmetric bilinear form. Let $W \subset V$ be a $(n-1)$-dimensional subspace, and let $\nu \in W^{\perp}$ be a unit vector. Let $S|_W$ be the restriction of $S$ on $W$. Fix $1 \leq m \leq n -1$. Then the bilinear form $S|_W$ is $m$-positive, if and only if
\[
 S \KulNo (\nu^b \otimes \nu^b) \in \Int(\mathcal{C}_m(V)).
\]   
\label{Proposition:Convexity-LinearAlgebra}
\end{proposition}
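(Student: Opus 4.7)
The plan is to reduce the six-index sum that defines $\mathcal{C}_m(V)$ to a single trace against $S|_W$, and then to close the argument by elementary sign considerations.

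\textbf{Step 1 (algebraic reduction to $W$).} Fix an orthonormal basis $\{e_i\}_{i=1}^n$ of $V$ and set $a_i := \langle e_i, \nu\rangle$. Expanding the Kulkarni--Nomizu product and recognizing a complete square gives
\[
 (S \KulNo (\nu^b \otimes \nu^b))(e_p,e_q,e_p,e_q) = a_q^2 S(e_p,e_p) + a_p^2 S(e_q,e_q) - 2 a_p a_q S(e_p,e_q) = S(\xi_{pq},\xi_{pq}),
\]
where $\xi_{pq} := a_q e_p - a_p e_q$ lies in $W$ because $\langle \xi_{pq}, \nu\rangle = 0$. Consequently the sum
\[
 \Sigma := \sum_{p=1}^m \sum_{q=p+1}^n (S \KulNo (\nu^b \otimes \nu^b))(e_p,e_q,e_p,e_q) = \trace\bigl(S|_W \circ Q\bigr)
\]
is a trace against the positive semidefinite operator $Q := \sum_{p=1}^m \sum_{q=p+1}^n \xi_{pq} \otimes \xi_{pq}$ on $W$, and in particular depends on $S$ only through $S|_W$.

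\textbf{Step 2 (structural identity for $Q$).} The technical core is a closed form for $Q$. Writing $u := \sum_{p\leq m} a_p e_p$, $w := \sum_{q>m} a_q e_q$, and $s := |u|^2 \in [0,1]$, a bookkeeping expansion (using $\sum_p a_p e_p = \nu$ to collect cross terms) yields
\[
 Q = P_1 + s P_2 + w \otimes w - \nu \otimes \nu,
\]
where $P_1, P_2$ denote the orthogonal projections of $V$ onto $\linspan(e_1,\ldots,e_m)$ and $\linspan(e_{m+1},\ldots,e_n)$, respectively. A direct check then shows $Q\nu = 0$, $Q = s\,\Id$ on the $(n-m-1)$-dimensional subspace $\linspan(e_{m+1},\ldots,e_n) \cap w^\perp$, and $Q = \Id$ on its orthogonal complement $U := W \cap \linspan(e_1,\ldots,e_m,w)$ inside $W$. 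The dimension $\dim U = m$ follows because $\nu = u+w$ already lies in $\linspan(e_1,\ldots,e_m,w)$. In summary, as operators on $W$,
\[
 Q|_W = s\, \pi_W + (1-s)\, \pi_U,
\]
a convex combination of the identity on $W$ and the orthogonal projection onto an $m$-dimensional subspace of $W$.

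\textbf{Step 3 (conclusion).} Step 2 yields $\Sigma = s\, \trace(S|_W) + (1-s)\, \trace(S|_U)$. If $S|_W$ is strictly $m$-positive with eigenvalues $\lambda_1 \leq \cdots \leq \lambda_{n-1}$, then $\lambda_1 + \cdots + \lambda_m > 0$ forces $\lambda_m > 0$ and hence $\lambda_i > 0$ for every $i \geq m$, so $\trace(S|_W) \geq \lambda_1 + \cdots + \lambda_m$; meanwhile the variational characterization of eigenvalue sums gives $\trace(S|_U) \geq \lambda_1 + \cdots + \lambda_m$ for every $m$-dimensional $U \subset W$. Since $s \in [0,1]$, this forces $\Sigma \geq \lambda_1 + \cdots + \lambda_m > 0$ for every orthonormal basis of $V$, which is exactly $S \KulNo (\nu^b \otimes \nu^b) \in \Int(\mathcal{C}_m(V))$. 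For the converse, suppose $S|_W$ is not strictly $m$-positive; pick an $m$-dimensional $U_0 \subset W$ with $\trace(S|_{U_0}) \leq 0$ (e.g.\ spanned by the eigenvectors of the $m$ smallest eigenvalues of $S|_W$) and take the orthonormal basis with $e_n = \nu$ and $\{e_1,\ldots,e_m\}$ an orthonormal basis of $U_0$. Then $s = 0$, $Q|_W = \pi_{U_0}$, and $\Sigma = \trace(S|_{U_0}) \leq 0$, contradicting membership in the interior.

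The main obstacle is the bookkeeping underlying Step 2 --- producing the closed form for $Q$ and identifying its two eigenspaces. Once the identity $Q|_W = s\pi_W + (1-s)\pi_U$ is established, both directions of the biconditional follow from elementary sign tracking on traces.
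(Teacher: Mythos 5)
Your proposal is correct, and it takes a genuinely different route from the paper. The paper first extends $S|_W$ to a bilinear form $T$ on all of $V$ (pulling back via the orthogonal projection onto $W$), observes that $m$-positivity of $S|_W$ makes $T$ $(m+1)$-positive, and then manipulates the sum directly via the identity $2\sum_{p\le m}\sum_{q>p} = \sum_{p,q} - \sum_{p,q>m}$, arriving at an expression $\trace_V(T)\sum_{p\le m}a_p^2 + \sum_{p>m}a_p^2\bigl(\sum_{q\le m}T(e_q,e_q) + T(w,w)\bigr)$ whose positivity follows from $(m+1)$-positivity of $T$; finally it discards the $S-T$ part because $(\omega\otimes\nu^b+\nu^b\otimes\omega)\KulNo(\nu^b\otimes\nu^b)=0$. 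You instead package the same algebraic reduction into the symmetric operator $Q=\sum\xi_{pq}\otimes\xi_{pq}$ on $W$, compute its explicit spectral decomposition $Q|_W = s\,\pi_W + (1-s)\,\pi_U$, and then close both directions with the Ky Fan eigenvalue-sum inequality. The two proofs use the same underlying combinatorial split of the double sum, but your operator-theoretic repackaging buys two things: it makes manifest from the start that the quantity depends on $S$ only through $S|_W$ (so the paper's detour through $T$ and the $\omega\otimes\nu^b$ cancellation is unnecessary), and it replaces the ad hoc ``$(m+1)$-positivity of the extension'' step with the standard variational characterization of eigenvalue sums, which also gives the sharp uniform lower bound $\Sigma\geq\lambda_1+\cdots+\lambda_m$. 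The paper's version is more elementary and self-contained (no appeal to Ky Fan), but yours is structurally cleaner. I verified the closed form for $Q$, the three eigenspace claims $Q\nu=0$, $Q|_{V_2}=s\,\Id$, $Q|_U=\Id$, the dimension count $\dim U=m$, and the degenerate cases $w=0$ (where $s=1$ and $Q|_W=\pi_W$) and $u=0$ (where $s=0$); all are correct.
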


\begin{proof} \ \\
Suppose that the bilinear form $S|_W$ is $m$-positive.
 We extend the bilinear form $S|_W$ to a bilinear $T$ on $V$
 by setting
 \[
  T(v,w) := S|_W (v^{\parallel}, w^{\parallel})
 \]
 for $v,w \in V$. Here $v^{\parallel}$
 denotes the orthogonal projection from $V$ to $W$.
 
 The assumption on the $m$-positivity of the bilinear form $S|_W$ on $W$
 implies that the bilinear form $T$ is $(m+1)$-positive on $V$.

We first want to show that $T$ being $(m+1)$-positive implies $T \KulNo (\nu^b \otimes \nu^b)\in \Int(\mathcal{C}_m(V))$.
 
 Let $\{e_1, \dots, e_n\}$ be an orthonormal basis
 of $V$ with respect to the inner product $\langle \cdot, \cdot \rangle$.
 We denote the components of the vector $\nu$ with respect to this orthonormal basis by $a_p$, i.e.\ $a_p = \langle \nu, e_p \rangle$.
 
 We have 
 \begin{align*}
  \left[ T \KulNo (\nu^b \otimes \nu^b) \right] (e_p, e_q, e_p, e_q)
  =
  a_p^2 T(e_q, e_q) + a_q^2 T(e_p, e_p) - 2 a_p a_q T(e_p, e_q)
 \end{align*}
 for $1 \leq p, q \leq n$ by definition of the Kulkarni--Nomizu product.
 We observe the identity
 \begin{align*}
  &2
  \sum_{p=1}^m \sum_{q = p+1}^n
    \left[ T \KulNo (\nu^b \otimes \nu^b) \right] (e_p, e_q, e_p, e_q) \\
 =&
 \left( 
  \sum_{p=1}^n \sum_{q=1}^n - \sum_{p=m+1}^{n} \sum_{q=m+1}^n
 \right)
   \left[ T \KulNo (\nu^b \otimes \nu^b) \right] (e_p, e_q, e_p, e_q).
 \end{align*}

 We evaluate the first term in the above sum:
 \begin{align*}
  &\sum_{p=1}^n \sum_{q=1}^n  
  \left[ T \KulNo (\nu^b \otimes \nu^b) \right] (e_p, e_q, e_p, e_q) \\
 =&
 2 \sum_{p=1}^n a_p^2 \sum_{q=1}^n T(e_q, e_q)
 - 2 T \left( \sum_{p=1}^n a_p e_p, \sum_{q=1}^n a_q e_q \right) 
 = 2 \trace_V(T) - 2 T(\nu, \nu) = 2 \trace_V(T).
 \end{align*}

 We evaluate the second term in the above sum:
 \begin{align*}
&\sum_{p=m+1}^{n} \sum_{q=m+1}^n
   \left[ T \KulNo (\nu^b \otimes \nu^b) \right] (e_p, e_q, e_p, e_q) \\
   =&
   2 \sum_{p=m+1}^n a_p^2 \sum_{q=m+1}^n T(e_q, e_q)
   - 2 T \left( 
    \sum_{p=m+1}^n a_p e_p, \sum_{q=m+1}^n a_q e_q
   \right).
 \end{align*}

 This implies
 \begin{equation}
  \begin{aligned}
  &\sum_{p=1}^m \sum_{q = p+1}^n
    \left[ T \KulNo (\nu^b \otimes \nu^b) \right] (e_p, e_q, e_p, e_q) \\
 =&
  \trace_V(T) \sum_{p=1}^m a_p^2
 +
  \sum_{p=m+1}^n a_p^2 \sum_{q=1}^m T(e_q, e_q)
 +  T \left( 
    \sum_{p=m+1}^n a_p e_p, \sum_{q=m+1}^n a_q e_q
   \right).
 \end{aligned}
 \label{Section2:Identity:Kulkarni-Nomizu}
 \end{equation}

 If $a_p = 0$ for all $m+1 \leq p \leq n$, then we deduce the estimate
 \begin{align*}
  \sum_{p=1}^m \sum_{q = p+1}^n
    \left[ T \KulNo (\nu^b \otimes \nu^b) \right] (e_p, e_q, e_p, e_q) 
 =  \trace_V(T) \sum_{p=1}^m a_p^2 > 0,
 \end{align*}
 since the bilinear form $T$ is $(m+1)$-positive by construction and hence $n$-positive.
 Hence we deduce $T \KulNo (\nu^b \otimes \nu^b) \in \Int(\mathcal{C}_m)$ in this case.
 
 Now suppose that $a_p \neq 0$ for some $m + 1 \leq p \leq n$.
 We define the unit vector
 \[
  w := \left( \sum_{p=m+1}^{n} a_p^2 \right)^{-\frac{1}{2}}
  \sum_{q=m+1}^n a_q e_q.
 \]
 With this definition we deduce from equation \eqref{Section2:Identity:Kulkarni-Nomizu}
 the identity
  \begin{align*}
  &\sum_{p=1}^m \sum_{q = p+1}^n
    \left[ T \KulNo (\nu^b \otimes \nu^b) \right] (e_p, e_q, e_p, e_q) 
 =
  \trace_V(T) \sum_{p=1}^m a_p^2
 +
  \sum_{p=m+1}^n a_p^2
 \left( \sum_{q=1}^m T(e_q, e_q) + T(w,w) \right).
 \end{align*}
 
 The first term involving the trace $\trace_V(T)$ is positive as above.
 The term in the bracket is positive, since the bilinear form
 $T$ is $(m+1)$-positive,
 and $w \perp \linspan\{e_1, \dots, e_m\}$
 by construction.
 Hence the sum is positive and we deduce $T \KulNo (\nu^b \otimes \nu^b)
 \in \Int(\mathcal{C}_m(V))$.  
 
 On the other hand, by the construction of $T$, the restriction of $S-T$  to the subspace $W$ vanishes. Therefore, we may write
 \[	S = T + \omega\otimes \nu^b+ \nu^b \otimes \omega,\]
where $\omega$ is a suitable 1-form. 
Note that
\[	(\omega\otimes \nu^b + \nu^b \otimes \omega)\KulNo (\nu^b\otimes \nu^b) = 0 \] 
by symmetry. Hence,
\[ S\KulNo ( \nu^b\otimes \nu^b)\in \Int(\mathcal{C}_m(V)). \]

 The other implication in the equivalence follows by taking
 the orthonormal basis $\{f_1, \dots, f_{n-1}, \nu \}$ of the vector space $V$,
 where $\{f_1, \dots, f_{n-1}\}$ is an orthonormal basis of the subspace $W$.
\end{proof}

\section{Preserving Curvature Conditions}

In this section, we assume that $g$ and $\tilde{g}$ are Riemannian metrics on $N$ such that $g - \tilde{g} = 0$ along $\partial N$. We describe our choice of perturbation as in 
work of S.~Brendle, F.C.~Marques and A.~Neves \cite{Brendle-Marques-Neves}.
We fix a neighborhood $U$ of the boundary $\partial N$ and a smooth boundary defining function $\rho: N\to [0,\infty)$ by taking it to be the distance function from the boundary $\partial N$ with respect to the metric $g$. Then we have $|D\rho|_g =1$. Since $g-\tilde{g}=0$ along the boundary $\partial N$, we can find a symmetric (0,2)-tensor $S$ such that  $\tilde{g}=g+\rho S$ in a neighborhood of $\partial N$ and $S=0$ outside $U$. The scalar-valued second fundamental forms and the boundary defining function satisfy
\begin{align*}
    \frac{1}{2}S(X,Y)= h_g(X,Y)-h_{\tilde{g}}(X,Y),
    \; \text{and} \;
    D^2\rho(X,Y) = -h_g(X,Y).\notag
\end{align*}
for all $X,Y \in \Gamma(T (\partial N))$. 
This implies that the identity
\begin{align}
    &h_g(X,Y) - h_{\tilde{g}}(X,Y)=\frac{1}{2}S(X,Y)=-D^2\rho(X,Y) - h_{\tilde{g}}(X,Y)
\end{align}
holds on the boundary $\partial N$ for all $X,Y\in \Gamma(T(\partial N))$.

We choose a smooth cut-off function $\chi:[0,\infty)\to[0,1]$ with the following properties (compare with \cite[Lemma 17]{Brendle-Marques-Neves}):
\begin{itemize}
    \item $\chi(s)=s-\frac{1}{2}s^2$ for $s\in[0,\frac{1}{2}]$;
    \item $\chi(s)$ is constant for $s\geq 1$;
    \item $\chi''(s)<0$ for $s\in[0,1)$.
\end{itemize}

Moreover, we choose a smooth cut-off function $\beta:(-\infty,0]\to[0,1]$ such that
\begin{itemize}
    \item $\beta(s)=\frac{1}{2}$ for $s\in [-1,0]$;
    \item $\beta(s)=0$ for $s\in (-\infty,-2]$.
\end{itemize}

For $\lambda > 0$ sufficiently large we define a smooth metric $\hat{g}_{\lambda}$ on 
the manifold $N$ by the formula 
\begin{align}
    \hat{g}_{\lambda}=
    \begin{cases}
    g+\lambda^{-1}\chi(\lambda\rho)S\quad\quad\quad\quad &\text{for}\quad \rho\geq e^{-\lambda^2}\\
    \tilde{g}-\lambda\rho^2\beta(\lambda^{-2}\log\rho)S\quad &\text{for}\quad\rho<e^{-\lambda^2}.
    \end{cases}
\end{align}
In the sequel, we will show that $\hat{g}_{\lambda}$ preserves 
positive $m$-intermediate curvature of $g$ and $\tilde{g}$ for sufficiently large $\lambda > 0$. Note that we have the identity $\hat{g}_{\lambda}=\tilde{g}$ in the region $\{\rho\leq e^{-2\lambda^2}\}$ and $\hat{g}_{\lambda}=g$ outside the neighbourhood $U$. Moreover, from the construction it follows that  $\hat{g}_{\lambda} \rightarrow g$ as $\lambda \rightarrow \infty$ in $C^{\alpha}$ for any $\alpha \in (0,1)$.

We first derive a lower bound for the $m$-intermediate curvature of the metric $\hat{g}_{\lambda}$. We first consider the region $\{\rho\geq e^{-\lambda^2}\}$.

\begin{proposition}[Curvature estimates
in inner gluing region] \ \\
 Suppose that $h_g - h_{\tilde{g}}$ is $m$-positive on the boundary $\partial N$.
  Let $\epsilon > 0$ be given. If $\lambda=\lambda (\epsilon, \chi)>0$ is sufficiently large, then 
\begin{align*}
    \sum_{p=1}^m\sum_{q=p+1}^n (\Rm_{\hat{g}_{\lambda}}(x)(e_p, e_q, e_p, e_q)
    -\Rm_{g}(x)(e_p, e_q, e_p, e_q))\geq -\epsilon
\end{align*}
for any $\hat{g}_{\lambda}$-orthonormal frame $\{e_1, \dots, e_n\}$ and any $x \in N$ in the region $\{\rho(x)\geq e^{-\lambda^2}\}$. 
\label{Proposition:CurvatureEstimates_InnerRegion}
\end{proposition}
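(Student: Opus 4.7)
The plan is to expand $\Rm_{\hat{g}_\lambda} - \Rm_g$ to leading order in $\lambda$ and then apply Proposition \ref{Proposition:Convexity-LinearAlgebra} to the dominant Kulkarni--Nomizu piece. Setting $A := \hat{g}_\lambda - g = \lambda^{-1}\chi(\lambda\rho) S$ on $\{\rho \geq e^{-\lambda^2}\}$, the relevant pointwise bounds are $|A|_g = O(\lambda^{-1})$, $|\nabla A|_g = O(|\chi'(\lambda\rho)|) + O(\lambda^{-1})$, and $|\nabla^2 A|_g = O(\lambda |\chi''(\lambda\rho)|) + O(|\chi'(\lambda\rho)|) + O(\lambda^{-1})$; the $O(\lambda)$ part comes from differentiating $\chi(\lambda\rho)$ twice in the $\nabla\rho$ direction, which to leading order gives $\nabla^2_{pq} A_{ij} \sim \lambda \chi''(\lambda\rho)\, S_{ij}(d\rho)_p (d\rho)_q$. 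Substituting this into the standard Riemann-tensor perturbation formula and tracking the four index-symmetrized second-derivative terms, they assemble into a Kulkarni--Nomizu product:
\[
\Rm_{\hat{g}_\lambda} - \Rm_g = -\tfrac{1}{2}\lambda \chi''(\lambda\rho)\, S \KulNo (d\rho \otimes d\rho) + E_\lambda,
\]
where the error $E_\lambda$ is schematically bounded by $C\bigl(|\chi'(\lambda\rho)|^2 + \lambda^{-1}\bigr)$, the $|\chi'|^2$-piece arising from the quadratic Christoffel contributions $T \cdot T$ with $T = \Gamma_{\hat{g}_\lambda} - \Gamma_g$.

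The leading term is handled by Proposition \ref{Proposition:Convexity-LinearAlgebra} applied with $\nu = \nabla\rho$ and $W = \nu^\perp$. The hypothesis that $h_g - h_{\tilde{g}}$ is $m$-positive on $\partial N$ gives that $S|_{T(\partial N)} = 2(h_g - h_{\tilde{g}})$ is $m$-positive, so Proposition \ref{Proposition:Convexity-LinearAlgebra} yields $S \KulNo (d\rho \otimes d\rho) \in \Int(\mathcal{C}_m(T_pN))$ pointwise along $\partial N$. Compactness of $\partial N$ together with continuity of the infimum of the $m$-intermediate sum over orthonormal frames gives a uniform constant $c_0 > 0$; continuity of $S$ and $\nabla\rho$ propagates the bound to a tubular neighborhood $U'$ of $\partial N$, and the $C^0$-convergence $\hat{g}_\lambda \to g$ ensures that $\hat{g}_\lambda$-orthonormal frames differ from $g$-orthonormal ones by $O(\lambda^{-1})$, so the estimate persists (with $c_0$ replaced by $c_0/2$) for $\hat{g}_\lambda$-orthonormal frames once $\lambda$ is large. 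Combined with $-\chi''(\lambda\rho) \geq 0$, the leading term then contributes at least $\tfrac{c_0}{2}\lambda|\chi''(\lambda\rho)|$ to the $m$-intermediate sum of $\Rm_{\hat{g}_\lambda} - \Rm_g$.

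It remains to absorb the error. I would split the inner region as $\{\rho \geq 1/\lambda\} \cup \{e^{-\lambda^2} \leq \rho \leq 1/\lambda\}$. On the first piece, $\chi(\lambda\rho)$ is constant, so $A$ is a smooth $C^2$-perturbation of size $O(\lambda^{-1})$ and $|\Rm_{\hat{g}_\lambda} - \Rm_g|_g = O(\lambda^{-1}) \geq -\epsilon$ for $\lambda$ large. On the second piece, partition by the size of $|\chi'(\lambda\rho)|$: where $C|\chi'(\lambda\rho)|^2 \leq \epsilon/2$ the error is absorbed into $\epsilon$ while the leading term is non-negative; where $|\chi'(\lambda\rho)| \geq \delta$ for some fixed $\delta = \delta(\epsilon) > 0$, the argument $\lambda\rho$ is confined to a compact subset of $[0,1)$ on which $|\chi''|$ has a uniform positive lower bound (using that $\chi'' < 0$ on $[0,1)$ with $\chi'(1) = 0$), and for $\lambda$ sufficiently large the $O(\lambda)$ leading term overwhelms the bounded error. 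The main obstacle is the careful bookkeeping of $E_\lambda$: one must verify that its Christoffel-square contributions genuinely carry a factor of $|\chi'(\lambda\rho)|^2$ (and not just $O(1)$), so that the error vanishes wherever $\chi$ is flat and the pointwise competition between the leading $\chi''$-term and the quadratic $\chi'$-term can be resolved uniformly across the transition region $\lambda\rho \approx 1$.
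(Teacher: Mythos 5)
Your strategy is essentially the one the paper uses: isolate the Kulkarni--Nomizu piece $-\tfrac12\lambda\chi''(\lambda\rho)\,S\KulNo(d\rho\otimes d\rho)$ as the dominant term, invoke Proposition~\ref{Proposition:Convexity-LinearAlgebra} together with compactness of $\partial N$ to get a uniform positive lower bound near the boundary, and then split the region according to whether $\chi$ has any ``curvature'' left. The paper gets the perturbation formula for free by quoting Equation~(5) of \cite{Chow} and then plugging in the two-forms $\varphi_{pq}$ with $4\Rm(e_p,e_q,e_p,e_q)=\Rm(\varphi_{pq},\varphi_{pq})$; you propose to rederive it, which is fine in principle.

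The one place you are too optimistic, and which you yourself flagged as ``the main obstacle,'' is the claim $E_\lambda = O(|\chi'(\lambda\rho)|^2 + \lambda^{-1})$. This does not hold. You already recorded $|\nabla^2 A|_g = O(\lambda|\chi''|) + O(|\chi'|) + O(\lambda^{-1})$; the Riemann perturbation is linear in $\nabla^2 A$ at leading order, so the $O(|\chi'|)$ piece of $\nabla^2 A$ (coming from $\chi'(\lambda\rho)\,D^2\rho\otimes S$ and $\chi'(\lambda\rho)\,d\rho\odot\nabla S$) enters the curvature difference linearly and is \emph{not} absorbed into the $\lambda^{-1}$ or the $|\chi'|^2$ bucket. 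Indeed, the estimate quoted from \cite{Chow} in the paper exhibits the error as
\[
\varphi^{ij}\varphi^{kl}\chi'(\lambda\rho)\bigl(-2D_iD_k\rho\,S_{jl}-\tfrac12\chi'(\lambda\rho)|D\rho|^2 S_{ik}S_{jl}\bigr)-C\chi'(\lambda\rho)|[\varphi]d\rho|-C\lambda^{-1},
\]
and the first term inside the bracket carries a single power of $\chi'$; so the honest bound is $E_\lambda = O(|\chi'(\lambda\rho)| + \lambda^{-1})$. Fortunately this does not break your argument: an $O(|\chi'|)$ error still vanishes where $\chi$ is flat, and your dichotomy (``$C|\chi'(\lambda\rho)|\le \epsilon/2$'' versus ``$|\chi'(\lambda\rho)|\ge\delta(\epsilon)$'') still forces $\lambda\rho$ into a compact subinterval of $[0,1)$ where $-\chi''$ has a uniform positive lower bound, so the $O(\lambda)$ leading term dominates. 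Monotonicity of $\chi'$ on $[0,1]$ (it decreases from $1$ to $0$) makes this equivalent to the paper's threshold split at $\rho = s_0\lambda^{-1}$ with $C\chi'(s_0) < \epsilon/2$. So: correct the power of $\chi'$ in your error estimate, and the proof goes through.
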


\begin{proof} \ \\
We fix a point $x \in N$ such that $\rho(x)\geq e^{-\lambda^2}$. Let $\{e_1, \dots, e_n\}$ be a geodesic normal frame around the point $x$ with respect to the metric $\hat{g}_{\lambda}$.  Let $\varphi$ be a two-form.
We write $\varphi=\sum_{i,j}\varphi^{ij}e_i\wedge e_j$ for coefficients $\varphi^{ij}$, which are anti-symmetric in $i$ and $j$. 
In the following the Einstein summation convention will be adopted freely. Since $\varphi$ is in particular a (2,0)-tensor, $\varphi$ induces by the fundamental principle
of tensor calculus a linear map $[\varphi]: (T_x N)^*\to T_x N$ via the action $[\varphi]w := \varphi^{ij}w(e_i)e_j$.
Equation (5) in work of the first author \cite{Chow} yields 
the estimate
\begin{align*} 
\Rm_{\hat{g}_{\lambda}}(\varphi, \varphi) - \Rm_g(\varphi, \varphi) 
&\notag\geq 2 \lambda (-\chi''(\lambda\rho)) S([\varphi] d\rho, [\varphi] d\rho) \\ 
&\notag\quad+ \varphi^{ij} \varphi^{kl} \chi'(\lambda\rho) (-2 D_i D_k \rho \, S_{jl} - \frac{1}{2} \chi'(\lambda\rho) |D\rho|_{\hat{g}_\lambda}^2 S_{ik} S_{jl}) \\ 
&\notag\quad- C \chi'(\lambda\rho) |[\varphi] d\rho| - C \lambda^{-1},
\end{align*} 
where $C>0$ is a positive constant independent of $\lambda$.

To pick up the sectional curvatures, we choose for $1 \leq p,q \leq n$ the two-form $\varphi_{pq}$ by specifying the components $(\varphi_{pq})^{ij} = \delta_p^i\delta_q^j - \delta_p^j\delta_q^i$. This implies the identities  
\begin{align*}
   [\varphi_{pq}]d\rho = \nabla_p\rho\ e_q - \nabla_q\rho\ e_p\quad\text{and}\quad 4\Rm(e_p, e_q, e_p, e_q) = \Rm(\varphi_{pq}, \varphi_{pq}).
\end{align*}
Recall that $S$ is supported in $U$, so $D\rho$ and $D^2\rho$ are uniformly bounded with respect to the metric $\hat{g}_{\lambda}$. Thus the above estimate implies after summation the inequality
\begin{equation}
\begin{aligned}
	&\sum_{p=1}^m\sum_{q=p+1}^n( \Rm_{\hat{g}_{\lambda}}(e_p, e_q, e_p, e_q) - \Rm(e_p, e_q, e_p, e_q))\\
	 \geq& \frac{1}{2} \lambda (-\chi''(\lambda\rho))\ \sum_{p=1}^m\sum_{q=p+1}^n S\Big(\nabla_p\rho\ e_q - \nabla_q\rho\ e_p,\ \nabla_p\rho\ e_q - \nabla_q\rho\ e_p\Big) 
  - C\chi'(\lambda\rho) - C\lambda^{-1}
 \end{aligned}
 {\label{perturbation est1}}
\end{equation}
in the region $\{\rho\geq e^{-\lambda^2}\}$. Here the constant $C$ is independent of $\lambda$, but it does depend on $S, g, \rho, N$.

By our assumption on the scalar-valued second fundamental forms and Proposition \ref{Proposition:Convexity-LinearAlgebra} we deduce 
\[
S\KulNo ( d\rho\otimes d\rho)\in \Int(\mathcal{C}_m)
\]
(with respect to the metric $g$) at each point on $\partial N$.
This allows us to fix a small number $a>0$ such that 
\[
S\KulNo ( d\rho\otimes d\rho) - 4a|\nabla\rho|^2\delta\KulNo\delta  \in \mathcal{C}_m
\]
 (with respect to the metric $g$) in a small neighborhood of $\partial N$  where $\rho \geq e^{-\lambda^2}$. 
 From the construction of the metric $\hat{g}_{\lambda}$ we deduce
\[
S\KulNo ( d\rho\otimes d\rho) - 2a|\nabla\rho|^2\delta\KulNo\delta \in \mathcal{C}_m
\]
 (with respect to the metric $\hat{g}_{\lambda}$) in a small neighborhood of $\partial N$ where $\rho \geq e^{-\lambda^2}$. 
  Moreover, we observe that
 \[
 S\Big(\nabla_p\rho\ e_q - \nabla_q\rho\ e_p,\ \nabla_p\rho\ e_q - \nabla_q\rho\ e_p\Big) = S\KulNo ( d\rho\otimes d\rho)(e_p, e_q, e_p, e_q).
 \]
For a positive constant $C(n,m):=\sum_{p=1}^m\sum_{q=p+1}^n(\delta\KulNo\delta)(e_p, e_q, e_p, e_q)$ we observe the estimate
 \begin{equation}{\label{perturbation est2}}
 \begin{aligned}
 \sum_{p=1}^m\sum_{q=p+1}^n S\Big(\nabla_p\rho\ e_q - \nabla_q\rho\ e_p,\ \nabla_p\rho\ e_q - \nabla_q\rho\ e_p\Big)
 &= \sum_{p=1}^m\sum_{q=p+1}^n (S\KulNo ( d\rho\otimes d\rho))(e_p, e_q, e_p, e_q) \\
 &\geq 2a C(n,m)|\nabla\rho|^2
 \end{aligned}
 \end{equation}
in a small neighborhood of the boundary $\partial N$  where $\rho \geq e^{-\lambda^2}$. Combining 
the estimate (\ref{perturbation est1}) and the estimate (\ref{perturbation est2}) we obtain the estimate
\begin{equation}{\label{perturbation est3}}
\begin{aligned}
	&\sum_{p=1}^m\sum_{q=p+1}^n( \Rm_{\hat{g}_{\lambda}}(e_p, e_q, e_p, e_q) - \Rm(e_p, e_q, e_p, e_q)) \\
	\geq& a C(n,m) \lambda (-\chi''(\lambda\rho)) |\nabla\rho|^2  - C\chi'(\lambda\rho) - C\lambda^{-1}
 \end{aligned}
\end{equation}
in the region $\{\rho\geq e^{-\lambda^2}\}$.

We split the region into two sub-regions as follows. Let us fix a real number $s_0\in [0,1)$ such that $C\chi'(s_0) < \frac{\epsilon}{2}$. By the construction of the cut-off function $\chi$, we have $\inf_{0\leq s\leq s_0} (-\chi''(s)) > 0$. This implies in the region $\{e^{-\lambda^2}\leq\rho < s_0\lambda^{-1}\}$ the estimate
\begin{align*}
	&\inf_{e^{-\lambda^2}\leq\rho	<s_0\lambda^{-1}}\left(\sum_{p=1}^m\sum_{q=p+1}^n( \Rm_{\hat{g}_{\lambda}}(e_p, e_q, e_p, e_q) - \Rm(e_p, e_q, e_p, e_q))\right) \\
	\geq& a C(n,m) \lambda\, \inf_{e^{-\lambda^2}\leq\rho	<s_0\lambda^{-1}}\left( (-\chi''(\lambda\rho)) |\nabla\rho|^2\right)  - C - C\lambda^{-1}\\
 \geq& a C(n,m) \lambda\, \inf_{0\leq s\leq s_0} (-\chi''(s))  - C - C\lambda^{-1}.
\end{align*}
Thus, we obtain
\[	\inf_{e^{-\lambda^2}\leq\rho	<s_0\lambda^{-1}}\left(\sum_{p=1}^m\sum_{q=p+1}^n( \Rm_{\hat{g}_{\lambda}}(e_p, e_q, e_p, e_q) - \Rm(e_p, e_q, e_p, e_q))\right)\to \infty\]
as $\lambda\to\infty$. Moreover, in the region $\{ \rho\geq s_0\lambda^{-1}\}$ we have
\begin{align*}
	&\inf_{\rho	\geq s_0\lambda^{-1}}\left(\sum_{p=1}^m\sum_{q=p+1}^n( \Rm_{\hat{g}_{\lambda}}(e_p, e_q, e_p, e_q) - \Rm(e_p, e_q, e_p, e_q))\right)
	\geq -C\chi'(s_0) - C\lambda^{-1}.
\end{align*}
Since $C\chi'(s_0) < \frac{\epsilon}{2}$, it follows that
\[	\inf_{\rho	\geq s_0\lambda^{-1}}\left(\sum_{p=1}^m\sum_{q=p+1}^n( \Rm_{\hat{g}_{\lambda}}(e_p, e_q, e_p, e_q) - \Rm(e_p, e_q, e_p, e_q))\right) \geq -\epsilon\]
if $\lambda > 0$ is sufficiently large. Putting the above together, we conclude that
\[
\inf_{\rho	\geq e^{-\lambda^2}}\left(\sum_{p=1}^m\sum_{q=p+1}^n( \Rm_{\hat{g}_{\lambda}}(e_p, e_q, e_p, e_q) - \Rm(e_p, e_q, e_p, e_q))\right) \geq -\epsilon\]
if $\lambda > 0$ is sufficiently large. This completes the proof
of Proposition \ref{Proposition:CurvatureEstimates_InnerRegion}.
\end{proof}

We next consider the region $\{\rho<e^{-\lambda^2}\}$:
\begin{proposition}
[Curvature estimates in outer gluing region]
\ \\
 Suppose that $h_g - h_{\tilde{g}}$ is $m$-positive on the boundary $\partial N$.  
  Let $\epsilon > 0$ be an arbitrary positive real number. If $\lambda = \lambda(\epsilon, \beta) >0$ is sufficiently large, then 
\begin{align*}
    \sum_{p=1}^m\sum_{q=p+1}^n (\Rm_{\hat{g}_{\lambda}}(x)(e_p, e_q, e_p, e_q)
    -\Rm_{\tilde{g}}(x)(e_p, e_q, e_p, e_q)) \geq -\epsilon
\end{align*}
for any $\hat{g}_{\lambda}$-orthonormal frame $\{e_1, \dots, e_n\}$
and any $x\in N$ in the region $\{\rho(x)< e^{-\lambda^2}\}$. 
\label{Proposition:CurvatureEstimates_OuterRegion}
\end{proposition}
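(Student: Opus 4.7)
My plan is to mirror the argument of Proposition \ref{Proposition:CurvatureEstimates_InnerRegion} with the roles of the base metric $g$ and the target metric $\tilde g$ exchanged. In the outer region I would rewrite
$$\hat g_\lambda = \tilde g + w_\lambda(\rho)\, S, \qquad w_\lambda(\rho) := -\lambda \rho^2 \beta(\lambda^{-2}\log\rho),$$
so that $\hat g_\lambda$ appears as a perturbation of $\tilde g$, and apply the curvature comparison formula from \cite{Chow} (equation (5)) in its general form. The derivation of that formula is algebraic in the scalar weight and its first two derivatives, so the form is unchanged and I would obtain, for any two-form $\varphi$,
$$\Rm_{\hat g_\lambda}(\varphi,\varphi) - \Rm_{\tilde g}(\varphi,\varphi) \geq -2 w_\lambda''(\rho)\, S([\varphi] d\rho, [\varphi] d\rho) + \varphi^{ij}\varphi^{kl} w_\lambda'(\rho)\bigl(-2 D_i D_k\rho\, S_{jl} - \tfrac{1}{2} w_\lambda'(\rho)|D\rho|^2_{\hat g_\lambda}\, S_{ik} S_{jl}\bigr) - C|w_\lambda'(\rho)|\cdot|[\varphi]d\rho| - C\|\hat g_\lambda - \tilde g\|_{C^0},$$
with $C$ independent of $\lambda$.

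The two key computations are
\begin{align*}
w_\lambda'(\rho) &= -2\lambda\rho\,\beta(\lambda^{-2}\log\rho) - \lambda^{-1}\rho\,\beta'(\lambda^{-2}\log\rho),\\
w_\lambda''(\rho) &= -2\lambda\,\beta(\lambda^{-2}\log\rho) - 3\lambda^{-1}\beta'(\lambda^{-2}\log\rho) - \lambda^{-3}\beta''(\lambda^{-2}\log\rho).
\end{align*}
Two features drive the estimate. First, the extra factor of $\rho < e^{-\lambda^2}$ forces $|w_\lambda'(\rho)| \leq C\lambda e^{-\lambda^2}$, and similarly $\|\hat g_\lambda - \tilde g\|_{C^0} \leq C\lambda e^{-2\lambda^2}$, so all terms in the comparison formula other than the Hessian term are super-exponentially small as $\lambda\to\infty$. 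Second, the leading coefficient $-2 w_\lambda''(\rho) = 4\lambda\,\beta(\lambda^{-2}\log\rho) + O(\lambda^{-1})$ is non-negative up to lower order, since $\beta \geq 0$.

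The conclusion then parallels the inner-region argument. I pick the two-forms $\varphi_{pq}$ exactly as before. Proposition \ref{Proposition:Convexity-LinearAlgebra}, applied using the strict $m$-positivity of $h_g - h_{\tilde g}$, gives $S\KulNo(d\rho\otimes d\rho) \in \Int(\mathcal{C}_m)$ with respect to $g$ at each point of $\partial N$. Since $\tilde g = g$ on $\partial N$ and $\hat g_\lambda \to \tilde g$ uniformly in the outer region (with $\hat g_\lambda \equiv \tilde g$ on $\{\rho \leq e^{-2\lambda^2}\}$), this open inclusion persists with a positive margin $a > 0$ with respect to $\hat g_\lambda$ throughout the outer region for $\lambda$ large, so that
$$\sum_{p=1}^{m}\sum_{q=p+1}^{n} S([\varphi_{pq}]d\rho, [\varphi_{pq}]d\rho) \geq 2a C(n,m)|\nabla\rho|^2.$$
Substituting into the summed comparison inequality produces
$$\sum_{p=1}^{m}\sum_{q=p+1}^{n}\bigl(\Rm_{\hat g_\lambda}(e_p,e_q,e_p,e_q) - \Rm_{\tilde g}(e_p,e_q,e_p,e_q)\bigr) \geq 2a C(n,m)\lambda\,\beta(\lambda^{-2}\log\rho)|\nabla\rho|^2 - C\lambda e^{-\lambda^2} - C\lambda e^{-2\lambda^2} - C\lambda^{-1}.$$
Since $\beta \geq 0$, the main term is non-negative and the error terms vanish as $\lambda\to\infty$, giving the claimed lower bound $\geq -\epsilon$ for all sufficiently large $\lambda$.

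The main subtlety I would watch for is sign-tracking: the outer-region weight has opposite sign and much larger $C^2$ size (order $\lambda$) than the inner-region weight $\lambda^{-1}\chi(\lambda\rho)$, but the factor $\rho^2 \leq e^{-2\lambda^2}$ keeps the $C^0$ and $C^1$ norms negligible, while the $C^2$ coefficient is precisely what interacts with $S$ via the Kulkarni--Nomizu mechanism of Proposition \ref{Proposition:Convexity-LinearAlgebra} to produce the positive main term. Verifying that Chow's comparison formula indeed extends to a general smooth weight function is the one step deserving a careful re-derivation, but since the formula's dependence on the weight is purely algebraic, this substitution is routine.
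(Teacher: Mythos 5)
Your proposal is correct and takes essentially the same approach as the paper. The paper proves this proposition by citing Equation (12) of \cite{Chow} directly, which is precisely the outer-region curvature comparison estimate with weight $-\lambda\rho^2\beta(\lambda^{-2}\log\rho)$, and then sums over the two-forms $\varphi_{pq}$ and invokes Proposition~\ref{Proposition:Convexity-LinearAlgebra}, exactly as you do. The only difference is that instead of quoting the outer-region formula, you re-derive it from a general-weight version of Chow's Equation (5) by computing $w_\lambda'$ and $w_\lambda''$ explicitly; this makes transparent why the first-derivative and $C^0$ contributions are super-exponentially small while the second-derivative coefficient $-2w_\lambda''=4\lambda\beta+O(\lambda^{-1})$ is essentially non-negative — which is precisely the mechanism behind the cited formula. (Your coefficient $4\lambda\beta$ exceeds the $2\lambda\beta$ quoted from Chow's Equation (12), but both give the same conclusion since only a non-negative lower bound on the main term is needed.)
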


\begin{proof} \ \\
In the region $\{\rho<e^{-\lambda^2}\}$, we have $\hat{g}_{\lambda}=\tilde{g}+\tilde{h}_{\lambda}$, where $\tilde{h}_{\lambda}$ is defined by 
\begin{align*}
    \tilde{h}_{\lambda} = -\lambda\rho^2\beta(\lambda^{-2}\log\rho)S.
\end{align*}

Let $\{e_1, \dots, e_n\}$ be a geodesic normal frame around $x$ with respect to the metric $\hat{g}_{\lambda}$. Equation (12) in 
work of the first author \cite{Chow} implies 
\begin{align}{\label{perturbation est4}}
    \Rm_{\hat{g}_{\lambda}}(\varphi, \varphi)-\Rm_{\tilde{g}}(\varphi, \varphi) \geq\ &  2\lambda\beta(\lambda^{-2}\log\rho)S([\varphi]d\rho, [\varphi]d\rho) - L\lambda^{-1}
\end{align}
for any two-form $\varphi = \varphi^{ij}e_i\wedge e_j$
and a positive constant $L > 0$ independent of $\lambda$. Choosing
for $1 \leq p,q \leq n$
the two-form $\varphi_{pq}$ by
$(\varphi_{pq})^{ij} = \delta_p^i\delta_q^j - \delta_p^j\delta_q^i$ in equation (\ref{perturbation est4}) and summing over $p,q$, we obtain
\begin{align*}
	&\sum_{p=1}^m\sum_{q=p+1}^n( \Rm_{\hat{g}_{\lambda}}(e_p, e_q, e_p, e_q) - \Rm_{\tilde{g}}(e_p, e_q, e_p, e_q))	\\
	&\notag\geq \frac{1}{2}\lambda\beta(\lambda^{-2}\log\rho) \sum_{p=1}^m\sum_{q=p+1}^n S\Big(\nabla_p\rho\ e_q - \nabla_q\rho\ e_p,\ \nabla_p\rho\ e_q - \nabla_q\rho\ e_p\Big) - L\lambda^{-1}.
\end{align*}
Proceeding similarly as in the proof of Proposition \ref{Proposition:CurvatureEstimates_InnerRegion}, we have
\begin{align*}
\sum_{p=1}^m\sum_{q=p+1}^n S\Big(\nabla_p\rho\ e_q - \nabla_q\rho\ e_p,\ \nabla_p\rho\ e_q - \nabla_q\rho\ e_p\Big) \geq 2a|\nabla\rho|_{\tilde{g}}^2.	
\end{align*}
(with respect to the metric $\hat{g}_{\lambda}$) in a neighborhood of $\partial N$  where $\rho < e^{-\lambda^2}$. This implies
\begin{align}{\label{perturbation est5}}
	&\sum_{p=1}^m\sum_{q=p+1}^n( \Rm_{\hat{g}_{\lambda}}(e_p, e_q, e_p, e_q) - \Rm_{\tilde{g}}(e_p, e_q, e_p, e_q))	
	\geq a\lambda\beta(\lambda^{-2}\log\rho) |\nabla\rho|^2 - L\lambda^{-1}	
\end{align}
in the region $\{\rho<e^{-\lambda^2}\}$. Hence, if $\lambda>0$ is sufficiently large, then we have
\begin{align*}
    \inf_{\rho<e^{-\lambda^2}}\left(\sum_{p=1}^m\sum_{q=p+1}^n( \Rm_{\hat{g}_{\lambda}}(e_p, e_q, e_p, e_q) - \Rm_{\tilde{g}}(e_p, e_q, e_p, e_q))\right) \geq -\epsilon.
\end{align*}
From this, the assertion follows.
\end{proof}

Combining Proposition \ref{Proposition:CurvatureEstimates_InnerRegion} and Proposition \ref{Proposition:CurvatureEstimates_OuterRegion}, we can summarize the results in this section:
\begin{corollary}{\label{curv est cor}} \ \\
 Suppose that $h_g - h_{\tilde{g}}$ is $m$-positive on the boundary $\partial N$. Let $\epsilon>0$ be an arbitrary positive real number. If $\lambda = \lambda(\epsilon, \chi, \beta) >0$ is sufficiently large, then we have the pointwise inequality	
	\begin{align*}
    &\sum_{p=1}^m\sum_{q=p+1}^n \Rm_{\hat{g}_{\lambda}}(x)(e_p, e_q, e_p, e_q) \\
    \geq& \min\left\{  \sum_{p=1}^m\sum_{q=p+1}^n \Rm_g(x)(e_p, e_q, e_p, e_q),  \sum_{p=1}^m\sum_{q=p+1}^n \Rm_{\tilde{g}}(x)(e_p, e_q, e_p, e_q)\right\} - \epsilon
	\end{align*}
	for any $\hat{g}_{\lambda}$-orthonormal frame $\{e_1, \dots, e_n\}$ and any $x\in N$.
\end{corollary}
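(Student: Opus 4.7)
The plan is to combine Proposition \ref{Proposition:CurvatureEstimates_InnerRegion} and Proposition \ref{Proposition:CurvatureEstimates_OuterRegion} in a direct case analysis, using that $N$ is partitioned into the inner gluing region $\{\rho \geq e^{-\lambda^2}\}$ and the outer gluing region $\{\rho < e^{-\lambda^2}\}$, where the metric $\hat{g}_{\lambda}$ is built as a perturbation of $g$ and $\tilde{g}$, respectively.

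First, given $\epsilon > 0$, I would choose $\lambda_1(\epsilon, \chi)$ from Proposition \ref{Proposition:CurvatureEstimates_InnerRegion} and $\lambda_2(\epsilon, \beta)$ from Proposition \ref{Proposition:CurvatureEstimates_OuterRegion}, and then set $\lambda_0 := \max\{\lambda_1, \lambda_2\}$ so that both conclusions hold simultaneously for every $\lambda > \lambda_0$.

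Second, fix any point $x \in N$ and any $\hat{g}_{\lambda}$-orthonormal frame $\{e_1, \dots, e_n\}$ at $x$, and split into cases. If $\rho(x) \geq e^{-\lambda^2}$, Proposition \ref{Proposition:CurvatureEstimates_InnerRegion} supplies
\[
\sum_{p=1}^m\sum_{q=p+1}^n \Rm_{\hat{g}_{\lambda}}(x)(e_p, e_q, e_p, e_q) \geq \sum_{p=1}^m\sum_{q=p+1}^n \Rm_g(x)(e_p, e_q, e_p, e_q) - \epsilon,
\]
and since the right-hand side is a fortiori at least $\min\{\sum \Rm_g, \sum \Rm_{\tilde{g}}\} - \epsilon$, the desired inequality holds at $x$. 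If instead $\rho(x) < e^{-\lambda^2}$, Proposition \ref{Proposition:CurvatureEstimates_OuterRegion} yields the analogous lower bound with $\tilde{g}$ in place of $g$, which again trivially dominates the minimum up to $-\epsilon$.

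I do not expect any real obstacle here: all of the substantive work — the perturbation estimate \eqref{perturbation est1} in the inner region, the perturbation estimate \eqref{perturbation est4} in the outer region, and the crucial input $S \KulNo (d\rho \otimes d\rho) \in \Int(\mathcal{C}_m)$ from Proposition \ref{Proposition:Convexity-LinearAlgebra} — has already been carried out in the two preceding propositions. The corollary is essentially a bookkeeping step that unifies the two regional estimates into a single pointwise lower bound, packaged in exactly the form needed to feed into the $m$-intermediate curvature preservation statement of Theorem \ref{main thm}.
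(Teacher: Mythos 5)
Your proposal is correct and follows precisely the same logic the paper uses: the corollary is stated in the paper without a written proof, simply as the combination of Proposition \ref{Proposition:CurvatureEstimates_InnerRegion} on $\{\rho \geq e^{-\lambda^2}\}$ and Proposition \ref{Proposition:CurvatureEstimates_OuterRegion} on $\{\rho < e^{-\lambda^2}\}$, with the $\lambda$ threshold taken large enough to cover both. Your case split and use of $\max\{\lambda_1,\lambda_2\}$ make this explicit in exactly the expected way.
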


\section{Proof of Theorem \ref{main thm} on preserving positive intermediate curvature}

 Suppose that $h_g - h_{\tilde{g}}$ is $m$-positive on the boundary $\partial N$. 
  Suppose also that $(N,g)$ and $(N, \tilde{g})$ have positive $m$-intermediate curvature.
  
  Fix a point $x \in N$ and let $\{E_1, \dots, E_N\}$ be an orthonormal basis
  of the tangent space $T_x N$ with respect to the Riemannian metric $\hat{g}_{\lambda}$. 
   We want to show that
   \[	\sum_{p=1}^m\sum_{q=p+1}^n \Rm_{\hat{g}_{\lambda}}(E_p, E_q, E_p, E_q)  > 0.\]
  We divide the proof into two cases: 
  In the first case $x \in N$ is in the inner gluing region $\{\rho\geq e^{-\lambda^2}\}$; 
  and in the second case $x \in N$ is in the outer gluing region $\{\rho < e^{-\lambda^2}\}$.

For the first case, we have $g = \hat{g}_{\lambda} - h_{\lambda}$ where $h_{\lambda}(x)=\lambda^{-1}\chi(\lambda\rho)S(x)$. Fix a point $x\in N$. We evolve the orthonormal basis $\{E_1, \dots, E_n\}$ in the tangent space $T_x N$ by the linear ordinary differential equation
\begin{align}
    \begin{cases}
    \frac{d}{ds}E_i(s) &=\ \frac{1}{2}h_{\lambda} \circ E_i(s)\\
    E_i(0)&=\ E_i.
    \end{cases}
\end{align}
Then we see that the basis $\{E_i(s)\}$ remains orthonormal with respect to the metric $g_s= \hat{g}_{\lambda}-sh_{\lambda}$. We define $e_i := E_i(1)$. Then the basis $\{e_1, \dots, e_n\}$ is orthonormal with respect to the metric $g = \hat{g}_{\lambda} - h_{\lambda}$. It follows that
 \[	\sum_{p=1}^m\sum_{q=p+1}^n \Rm_{g}(e_p, e_q, e_p, e_q)  > 0.\]
In view of Corollary \ref{curv est cor}, it suffices to show that  
\[	\sum_{p=1}^m\sum_{q=p+1}^n \Rm_{g}(E_p, E_q, E_p, E_q)  > 0.\]
By writing $E_i(s)=A_i^k(s)E_k$, we observe that
\begin{align*}
    \frac{d}{ds}E_i(s) &= \frac{d}{ds}A_i^k(s)E_k
    = \frac{1}{2}h_l^kA_i^l(s)E_k
    = \frac{1}{2}\lambda^{-1}\chi(\lambda\rho)S_l^kA_i^l(s)E_k
\end{align*}
for any $s\in[0,1]$. This implies the estimate
\begin{align*}
    &\Big|\frac{d}{ds}\Rm_g(E_p(s), E_q(s), E_p(s), E_q(s))\Big| \leq C\lambda^{-1},
\end{align*}
where $C$ is a positive constant depending only on $N,g,\tilde{g}$ and $\chi$. This implies that
\begin{align*}
    \Big|\Rm_g(E_p, E_q, E_p, E_q) - \Rm_g(e_p, e_q, e_p, e_q)\Big|
    &\leq \int_0^1\frac{C}{\lambda}d\tau\leq \frac{C}{\lambda}. \notag
\end{align*}
Hence we have
\begin{align*}
    \sum_{p=1}^m\sum_{q=p+1}^n \Rm_{g}(E_p, E_q, E_p, E_q) \geq \frac{1}{2} \sum_{p=1}^m\sum_{q=p+1}^n \Rm_{g}(e_p, e_q, e_p, e_q) > 0,
\end{align*}
if $\lambda>0$ is sufficiently large.

For the second case, we have $\tilde{g} = \hat{g}_{\lambda} +  \lambda\rho^2\beta(\lambda^{-2}\log\rho)S$. Thus $h_{\lambda} = \lambda\rho^2\beta(\lambda^{-2}\log\rho)S$. Following the same argument as in the first case and using the fact that 
\begin{align*}
    |h_{\lambda}|\leq C(g,\tilde{g},\beta)e^{-\lambda^2}\leq \frac{C(g,\tilde{g},\beta)}{\lambda}
\end{align*}
in the region $\{\rho<e^{-\lambda^2}\}$ for sufficiently large $\lambda>0$, we also deduce
\[	\sum_{p=1}^m\sum_{q=p+1}^n \Rm_{\tilde{g}}(E_p, E_q, E_p, E_q) \geq \frac{1}{2} \sum_{p=1}^m\sum_{q=p+1}^n \Rm_{\tilde{g}}(\tilde{e}_p, \tilde{e}_q, \tilde{e}_p, \tilde{e}_q) > 0\]
for sufficiently large $\lambda > 0$. 

Combining the two cases together,  Corollary \ref{curv est cor} implies that 
\begin{align*}
   \sum_{p=1}^m\sum_{q=p+1}^n \Rm_{\hat{g}_{\lambda}}(E_p, E_q, E_p, E_q) >0
\end{align*}
for sufficiently large $\lambda>0$. Since the point $x\in N$ and 
the orthonormal basis $\{E_1,...,E_n\}\subset T_x N$ are arbitrary, we conclude that
\[
\Rm_{\hat{g}_{\lambda}}\in \Int(\mathcal{C}_m)
\]
on $N$ for sufficiently large $\lambda > 0$.
This finishes the proof of the theorem.

\section{Proof of Theorem \ref{boundarypartialtoricalband}
on boundaries of partially torical bands}

In this section we prove Theorem \ref{boundarypartialtoricalband} by using the strategy outlined by M.~Gromov and H.-B.~Lawson.
We prove a generalization of their doubling lemma 
(as stated in Lemma \ref{PSCdoubling}).

Once we have established the doubling lemma
Theorem \ref{boundarypartialtoricalband} follows
directly by the generalization of the Geroch conjecture,
Theorem \ref{Theorem:BHJ:GeneralizationGerochsConjecture}.

\begin{lemma}[Doubling for positive $m-$intermediate curvature]\label{doublingk} \ \\
Suppose $(N^n,g)$ is an orientable compact smooth Riemannian manifold
with smooth boundary $\partial N$, such that
the metric $g$ has positive $m$-intermediate curvature,
and the boundary $\partial N$ is strictly $m$-convex (possibly finitely many connected components).
Then the double of $N$ carries a metric of
positive $m$-intermediate curvature.
\end{lemma}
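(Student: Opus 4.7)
The plan is to reduce to Theorem \ref{main thm}: I will produce on $N$ a second metric $\tilde{g}$ that has positive $m$-intermediate curvature, agrees with $g$ on $\partial N$, and satisfies $h_{\tilde{g}} = -h_g$. Theorem \ref{main thm} will then glue $g$ and $\tilde{g}$, and the resulting metric on $N$ will be doublable smoothly across $\partial N$.

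First I would extend $(N, g)$ across $\partial N$ by attaching an external collar $\partial N \times (-\epsilon_0, 0]$ and extending $g$ to a smooth metric $g^+$; by openness of the $m$-intermediate curvature condition, $g^+$ retains positive $m$-intermediate curvature for $\epsilon_0$ small. In Fermi coordinates $(x, t)$ near $\partial N$, write $g^+ = g^+_t(x) + dt^2$ with $g^+_t$ smooth for $t \in (-\epsilon_0, \delta)$. I would then define $\tilde{g}$ on $N$ as follows: on a thin collar $\partial N \times [0, \delta_1]$ set $\tilde{g} = g^+_{-t}(x) + dt^2$, the ``reflected'' metric; outside a slightly larger neighborhood set $\tilde{g} = g$; and interpolate smoothly in between. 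On the thin collar, $\tilde{g}$ is isometric to $g^+$ restricted to $\partial N \times [-\delta_1, 0]$ via $(x,t) \mapsto (x,-t)$, so it has positive $m$-intermediate curvature there. Moreover $\tilde{g}|_{\partial N} = g|_{\partial N}$ and, using $\partial_t(g^+_{-t})|_{t=0} = -\partial_s g^+_s|_{s=0} = 2h_g$, we obtain $h_{\tilde{g}} = -h_g$. Because $\partial N$ is strictly $m$-convex, $h_g - h_{\tilde{g}} = 2h_g$ is strictly $m$-positive, so Theorem \ref{main thm} yields, for $\lambda$ large, a smooth metric $\hat{g}_{\lambda}$ of positive $m$-intermediate curvature on $N$ equal to $\tilde{g}$ in a neighborhood of $\partial N$ and to $g$ outside a slightly larger one.

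To assemble the doubled metric, put $\hat{g}_{\lambda}$ on one copy $N_1$ and $g$ on the other copy $N_2$, identified along $\partial N$. Choose a unified coordinate $u$ across the seam: $u = t$ on $N_1$ and $u = -s$ on $N_2$, where $s$ is the Fermi coordinate of $N_2$. Near the seam, the metric on $N_1$ equals $\tilde{g} = g^+_{-u}(x) + du^2$, while on $N_2$ it equals $g_s(x) + ds^2 = g^+_{-u}(x) + du^2$ since $g^+$ restricts to $g$ for $s \geq 0$. The two expressions coincide in a tubular neighborhood of the seam and are smooth in $u$ across $u = 0$ because $g^+$ is a smooth extension of $g$, yielding a smooth metric on $D(N)$; positive $m$-intermediate curvature holds globally by construction. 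The main obstacle is constructing $\tilde{g}$ in the interpolation zone without losing positive $m$-intermediate curvature: the reflected metric and $g$ differ by first-order terms at $\partial N$, so a naive cutoff interpolation introduces second-derivative contributions of order the inverse of the transition width, which can overwhelm the lower bound on $\Rm_g$. The construction must be performed carefully, exploiting the smallness of the deviation between the reflected metric and $g$ near $\partial N$ and the uniform positivity of $\Rm_g$ on the compact $N$ to control the curvature of $\tilde{g}$ through the transition region.
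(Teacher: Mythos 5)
Your approach is genuinely different from the paper's. The paper does not use Theorem~\ref{main thm} at all to prove the doubling lemma; it performs the Gromov--Lawson bending construction directly in $N\times I$, computes the principal curvatures of the bent hypersurface $D(N)$, applies the Gauss equation, and observes that the dominant term for small $\epsilon$ is $\frac{1}{\epsilon}\cos^2\theta\,\bigl(h_{\partial N_1}\KulNo(\nu^\flat\otimes\nu^\flat)\bigr)(e_p,e_q,e_p,e_q)$, which is positive after summation by Proposition~\ref{Proposition:Convexity-LinearAlgebra}. Your reduction to the gluing theorem via a reflected metric $\tilde{g}$ with $h_{\tilde{g}}=-h_g$ is a clever alternative, and the computation that $h_g-h_{\tilde{g}}=2h_g$ is strictly $m$-positive, as well as the smooth-assembly argument at the seam using the extension $g^+$, are both correct.

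However, there is a genuine gap, and you locate it yourself but do not close it: producing $\tilde{g}$ as a metric of positive $m$-intermediate curvature on \emph{all} of $N$, as the hypotheses of Theorem~\ref{main thm} literally require. The reflected metric $g^+_{-t}+dt^2$ and the original $g$ differ at first order in $t$ (by $4t\,h_g+O(t^2)$), so a cutoff interpolation over a width-$\delta$ band introduces curvature contributions of size $\delta^{-1}$ coming from second derivatives of the cutoff hitting the difference. This is not ``controlled by uniform positivity of $\Rm_g$'': it blows up as $\delta\to 0$, and for fixed $\delta$ it is not small. In fact this is \emph{precisely} the bending problem that Theorem~\ref{main thm} (via the delicate choice of cutoffs $\chi,\beta$ and the two-scale regions $\rho\gtrless e^{-\lambda^2}$) is engineered to solve, so invoking it to repair the interpolation would be circular. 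The proposal as written therefore asserts the hardest step without an argument.

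The reduction can be salvaged, but only by going inside the proof of Theorem~\ref{main thm}: the positivity of $\Rm_{\tilde{g}}$ is used only in Proposition~\ref{Proposition:CurvatureEstimates_OuterRegion} and in the outer case of Section~5, i.e.\ only in the region $\{\rho<e^{-\lambda^2}\}$. Hence it suffices for $\tilde{g}$ to have positive $m$-intermediate curvature in an arbitrarily small neighborhood of $\partial N$, while elsewhere it need only be a smooth Riemannian metric. With that observation you could take $\tilde{g}$ equal to the reflected metric on a thin collar and extend it arbitrarily (smoothly, positive definite) to the rest of $N$, with no interpolation-curvature issue. But this strengthening of Theorem~\ref{main thm} is not what you invoke; as stated, the reduction does not go through without it.
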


 We mimic the doubling process by M.~Gromov and H.-B.~Lawson, see Theorem 5.7 in \cite{GromovLawson:Spin}. We will closely follow their notations and constructions. 
 For completeness we describe their construction in detail.
 
We first shrink the manifold $N$ a little bit while preserving its boundary condition as follow: let $N_1=N\backslash C$ where $C$ is a thin collar of the boundary $\partial N$ and $N_1$ is chosen so that $\partial N_1$ is still strictly $m$-mean convex. We then consider the Riemannian product $N\times I$ with Riemannian metric $g_N+dt\otimes dt$ and define $D(N) = \{p \in  N\times I \mid \text{dist}(p, N_1) = \epsilon\}$, where $0 <\epsilon \ll 1$. Note that $D(N)$ is homeomorphic to the double of $N$.

Now we fix a point $x\in \partial N_1$, and let $\sigma$ be the geodesic segment in $N_1$ emanating orthogonally from $\partial N_1$ at $x$. Then the product $\sigma\times I$ will be totally geodesic in the product $N\times I$.

Let $\mu_1,...\mu_{n-1}$ be the principal curvatures of $\partial N_1$ at $x$.
By the construction of M.~Gromov and H.-B.~Lawson the principle
curvatures of $D(N)$ will be of
the following form for a point $x_{\theta}$ corresponding
to an angle $\theta$ (see Figure 8 in \cite{GromovLawson:Spin}): 
\begin{align}\label{principalcurvatures}
	\lambda_k=(\mu_k+O(\epsilon))\cos \theta +O(\epsilon^2 ) \; \text{for} \; k = 1, \dots, n-1, \; \text{and} \;\lambda_n=\frac{1}{\epsilon}\cos \theta +O(\epsilon).
\end{align}

As in the Figure 8 in \cite{GromovLawson:Spin}, we have a natural polar coordinates describing these $x_\theta$, let us denote $pr_N(x_\theta)$ the projection of $x_\theta$ to the corresponding point on $\partial N_1$.  Since the bilinear forms $h_{\partial N_1}$ at $pr_N(x_\theta)$ and $h_{D(N)}$ at $x_{\theta}$ are diagonalized simultaneously in this construction, we have the following relation:
\begin{align*}
	h_{D(N)}\Big|_{x_\theta}=\left((\cos\theta) h_{\partial N_1}\Big|_{pr_N(x_\theta)} +O(\epsilon)\right)\oplus\left(\frac{1}{\epsilon}\cos \theta +O(\epsilon)\right)(\nu^\flat\otimes \nu^\flat )\Big|_{pr_N(x_\theta)}.
\end{align*}

We apply the Gauss equation to $D(N)$ as a submanifold of $N\times I$ at the point $x_\theta$: We have for any orthonormal basis $\{e_i\}_{i=1}^n$ at the point $x_\theta$ the relation
\begin{align}\label{Gausseqn}
\text{Rm}^{D(N)}(e_i,e_j,e_i,e_j)	= \text{Rm}^{N\times I}(e_i,e_j,e_i,e_j) +h_{D(N)}(e_i,e_i)h_{D(N)}(e_j,e_j)-h_{D(N)}(e_i,e_j)^2.
\end{align}

We note that the second fundamental form terms
of $D(N)$ are related to the second fundamental form terms
of the boundary $\partial N_1$. This implies
\begin{align*}
	&h_{D(N)}(e_i,e_i)h_{D(x)}(e_j,e_j)-h_{D(N)}(e_i,e_j)^2\Big|_{x_\theta}\\
	=&\left((\cos\theta) h_{\partial N_1}(e_i,e_i)+\frac{1}{\epsilon}\cos\theta\cdot \nu^\flat(e_i)^2 +O(\epsilon)\right)\Big|_{pr_N(x_\theta)} \\
 &\cdot \left((\cos\theta) h_{\partial N_1}(e_j,e_j)+\frac{1}{\epsilon}\cos\theta\cdot \nu^\flat(e_j)^2 +O(\epsilon)\right)\Big|_{pr_N(x_\theta)}\\
	&-\left((\cos\theta) h_{\partial N_1}(e_i,e_j)+\frac{1}{\epsilon}\cos\theta\cdot \nu^\flat(e_i)\nu^\flat(e_j) +O(\epsilon)\right)^2\Big|_{pr_N(x_\theta)}\\
	=&\frac{1}{\epsilon^2}\left(\cos^2\theta \nu^\flat(e_i)^2\nu^\flat(e_j)^2- \cos^2\theta \nu^\flat(e_i)^2\nu^\flat(e_j)^2 \right)\Big|_{pr_N(x_\theta)}\\
	&+\frac{1}{\epsilon}\cos^2\theta \left(h_{\partial N_1}(e_i,e_i)\nu^\flat (e_j)^2+h_{\partial N_1}(e_j,e_j)\nu^\flat (e_i)^2-2h_{\partial N_1}(e_i,e_j)\nu^\flat (e_i)\nu^\flat (e_j)\right)\Big|_{pr_N(x_\theta)}\\
	&+O(1)\\
	=&\frac{1}{\epsilon}\cos^2\theta \left(h_{\partial N_1} \KulNo (\nu^\flat\otimes \nu^\flat ) \right)(e_i,e_j,e_i,e_j)\Big|_{pr_N(x_\theta)}+O(1),
\end{align*}
where the terms of order $O(\frac{1}{\epsilon^2})$ cancelled.

Summation of equation (\ref{Gausseqn}) yields the following identity for the $m$-intermediate curvature:
\begin{align*}
&\sum\limits_{p=1}^m\sum\limits_{q=p+1}^n \text{Rm}^{D(N)}(e_p,e_q,e_p,e_q )\Big|_{x_\theta}\\
	=&\sum\limits_{p=1}^m\sum\limits_{q=p+1}^n \left( \text{Rm}^{N\times I}(e_p,e_q,e_p,e_q) +h_{D(N)}(e_p,e_p)h_{D(N)}(e_q,e_q)-h_{D(N)}(e_p,e_q)^2\right)\Big|_{x_\theta}\\
    =&\sum\limits_{p=1}^m\sum\limits_{q=p+1}^n \text{Rm}^{N}(e_p,e_q,e_p,e_q)\Big|_{pr_N(x_\theta)} \\
    &+\sum\limits_{p=1}^m\sum\limits_{q=p+1}^n\frac{1}{\epsilon}\cos^2\theta \left(h_{\partial N_1} \KulNo (\nu^\flat\otimes \nu^\flat ) \right)(e_p,e_q,e_p,e_q)\Big|_{pr_N(x_\theta)}+O(1)
\end{align*}

The result then follows from Proposition \ref{Proposition:Convexity-LinearAlgebra} by choosing $\epsilon>0$ sufficient small.

\end{document}